\newtheorem{Theorem}{Theorem}
\newtheorem{thmalpha}{Theorem}
\newtheorem{Lemma}{Lemma}
\newtheorem{Proposition}{Proposition}
\newtheorem{Corollary}{Corollary}
\theoremstyle{definition}
\newtheorem{Problem}{Problem}
\theoremstyle{remark}
\newtheorem{Remark}{Remark}
\newtheorem*{acknowledgement}{Acknowledgement}
\begin{document}

%\thispagestyle{empty}

%%%%%%%%%%%%%%%
% Title.
% Do File->Insert File->amstitle.tex, or C-x i amstitle.tex.
%%%%%%%%%%%%%%%
\title[Stability of the Kaczmarz Reconstruction]{Stability of the Kaczmarz Reconstruction for Stationary Sequences}
\author{Caleb Camrud}
\author{Evan Camrud}
\author{Lee Przybylski}
\author{Eric S. Weber}
\address{Department of Mathematics, Iowa State University, 396 Carver Hall, Ames, IA 50011}
\email{ccamrud@iastate.edu}
\email{ecamrud@iastate.edu}
\email{leep@iastate.edu}
\email{esweber@iastate.edu}
\subjclass[2010]{Primary: 41A65, 42A16; Secondary: 30H10, 46E22}
\keywords{Kaczmarz algorithm, Fourier series, singular measure, Hardy space.}
\date{\today}
\begin{abstract}
The Kaczmarz algorithm is an iterative method to reconstruct an unknown vector $f$ from inner products $\langle f , \varphi_{n} \rangle $.  We consider the problem of how additive noise affects the reconstruction under the assumption that  $\{ \varphi_{n} \}$ form a stationary sequence.  Unlike other reconstruction methods, such as frame reconstructions, the Kaczmarz reconstruction is unstable in the presence of noise.  We show, however, that the reconstruction can be stabilized by relaxing the Kaczmarz algorithm; this relaxation corresponds to Abel summation when viewed as a reconstruction on the unit disc.  We show, moreover, that for certain noise profiles, such as those that lie in $H^{\infty}(\mathbb{D})$ or certain subspaces of $H^{2}(\mathbb{D})$, the relaxed version of the Kaczmarz algorithm can fully remove the corruption by noise in the inner products.  Using the spectral representation of stationary sequences, we show that our relaxed version of the Kaczmarz algorithm also stabilizes the reconstruction of Fourier series expansions in $L^2(\mu)$ when $\mu$ is singular.
\end{abstract}
\maketitle

% LINE SPACING
% \setlength{\baselineskip}{18pt}

% BEGIN THE MAIN BODY OF THE PAPER

	\section{Introduction}

The Kaczmarz algorithm \cite{K-37} is an iterative procedure for solving a system of linear equations.  We formulate the system of linear equations as inner products: given vectors $\{ \varphi_{0}, \dots, \varphi_{N-1}\} \subset \mathbb{C}^d$, find $\vec{x} \in \mathbb{C}^{d}$ such that $\langle \vec{x}, \varphi_{j} \rangle = b_{j}$.  The Kaczmarz algorithm proceeds as follows: given a solution guess $\vec{x}_n$ and an equation number $i$, we calculate
$r_i = b_i - \langle \vec{x}_n, \varphi_i \rangle$ (the residual for equation $i$), and define
\begin{equation}\label{eq:update}
  \vec{x}_{n+1} = \vec{x}_n + \frac{r_i}{\| \varphi_i \|^2} \varphi_i.
\end{equation}
This makes the residual of $\vec{x}_{n+1}$ in equation $i$ equal to 0. We then iterate repeatedly through all equations (i.e. $\lim_{n \to \infty} \vec{x}_{n}$ where $n+1 \equiv i \mod N$).  Kaczmarz proved that if $\{ \varphi_{0}, \dots, \varphi_{N-1}\}$ span $\mathbb{C}^d$ and the equations $\langle \vec{x}, \varphi_{j} \rangle = b_{j}$ are consistent, then $\vec{x}_{n} \to \vec{x}$.  If the system is inconsistent, then the relaxed version of the Kaczmarz algorithm can be utilized to obtain the least-squares solution (see Subsection \ref{ssec:RK-LSS} for details).

The Kaczmarz algorithm can also be used to reconstruct a vector (signal) in an infinite dimensional Hilbert space $H$ from linear measurements.  For a sequence of unit vectors $\{ \varphi_{n} \}_{n=0}^{\infty} \subset H$ ($H$ is of any dimension, finite or infinite), the sequence is said to be \emph{effective} if for every $\vec{x} \in H$, given the data $\{ \langle \vec{x}, \varphi_{n} \rangle \}_{n=0}^{\infty}$, the sequence of approximations $\vec{x}_{n}$ to $\vec{x}$ given by the Kaczmarz algorithm as defined in Equation (\ref{eq:update}) has the property that $\lim_{n \to \infty} \| \vec{x} - \vec{x}_{n} \| = 0$.  

In an infinite dimensional Hilbert space, the question of when a sequence of vectors is an effective sequence is subtle.  There are two main existing results concerning the effectivity of sequences.  The full characterization is in \cite{HalSzw05} (see also \cite{Szw06,CzaTan13a}) and is given in terms of when a certain infinite matrix defines a partial isometry on $\ell^2(\mathbb{N}_{0})$.  Another  characterization concerns stationary sequences, which is our focus here.  A sequence $\{ \varphi_{n}\}_{n=0}^{\infty}$ is stationary provided for every $n,m,k \in \mathbb{N}_{0}$, $\langle \varphi_{n + k} , \varphi_{m + k} \rangle = \langle \varphi_{n}, \varphi_{m} \rangle$.  Kwapien and Mycielski  \cite{KwMy01} prove the following:  the stationary sequence $\{ \varphi_{n} \}_{n=0}^{\infty} \subset H$ is effective if and only if it is linearly dense and the spectral measure of the sequence is either Lebesgue measure or purely singular.  %A stationary sequence satisfies the condition $ \langle \varphi_{n + k} , \varphi_{m + k} \rangle = \langle \varphi_{n}, \varphi_{m} \rangle$, which induces a positive definite function on the group $\mathbb{Z}$, and so by Bochner's theorem the function is the Fourier transform of a measure on the dual group $\mathbb{T}$; this measure is the spectral measure of the stationary sequence.  

We consider the problem of stability, in the presence of additive noise, of the Kaczmarz reconstruction for stationary sequences which are effective:  
\begin{Problem}  \label{P:problem-statement}
Suppose $\{\varphi_{n} \}_{n=0}^{\infty} \subset H$ is a stationary, effective sequence.  Given the data $\{ \langle \vec{x}, \varphi_{n} \rangle + \epsilon_{n} \}_{n=0}^{\infty}$, does the Kaczmarz algorithm provide a reconstruction of $\vec{x}$?  Explicitly, the update becomes
\begin{equation} \label{eq:noise-update}
\vec{x}_{n+1} = \vec{x}_{n} + \left( \langle \vec{x}, \varphi_{n+1} \rangle + \epsilon_{n+1} - \langle \vec{x}_{n} , \varphi_{n+1} \rangle \right) \varphi_{n+1}.
\end{equation}
Does the sequence $\{ \vec{x}_{n} \}$ of approximations converge?  If so, what is the error in the reconstruction contributed by the noise $\{ \epsilon_{n} \}_{n=0}^{\infty}$: $\lim_{n \to \infty} \| \vec{x} - \vec{x}_{n} \|$?  
\end{Problem}

As we will demonstrate, Problem \ref{P:problem-statement} can be reformulated in terms of analytic functions on $\mathbb{D}$.  In particular, the characterization given by Kwapien and Mycielski uses the Herglotz Representation, which demonstrates that the Poisson transform induces a one-to-one correspondence between singular measures on $\mathbb{T}$ and inner functions on $\mathbb{D}$.  In \cite{HW17a}, the proof in \cite{KwMy01} is restated in terms of single variable operator theory: Beurling's theorem on the shift invariant subspaces of the Hardy space $H^2(\mathbb{D})$ \cite{beu48a}, the spectral theorem of rank-one perturbations of the shift \cite{Clark72}, and the Normalized Cauchy Transform \cite{Pol93}.  Subsequent work \cite{HJW18a} demonstrated that the Kaczmarz algorithm can be used to construct reproducing kernels within the Hardy space that have prescribed boundary behavior.
	
The boundary behavior arises from the spectral representation of stationary sequences (see Subsection \ref{ssec:HS-BF}). From this spectral representation, we are able to translate the problem onto $\mathbb{D}$.  Our main results will be to show that the reconstruction with noisy measurements in Equation \eqref{eq:noise-update} can be stabilized by employing Abel summation, e.g. Theorems \ref{Th:overview}, \ref{Th:epsilon-boundary} and \ref{Th:epsilon-bounded}, which corresponds to the boundary data of an analytic function on $\mathbb{D}$.  We will describe how Abel summation can be given in terms of an augmented Kaczmarz algorithm in Theorem \ref{Th:Aug-Kacz}.  We will then be able to use powerful results concerning the boundary behavior of the model subspaces of $H^2(\mathbb{D})$ (i.e., the subspaces which are backward shift invariant \cite{beu48a,Clark72,Aleks89a,Pol93}) and the machinery of sub-Hardy Hilbert spaces \cite{dBR66a,Sar94}.  We note that the augmented Kaczmarz algorithm shares some similarities to the relaxed Kaczmarz algorithm: both employ a relaxation parameter, and it is the limiting behavior of the relaxation parameter that yields the stability, in both cases, of the reconstructions in the presence of noise.  However, we will demonstrate in Proposition \ref{P:augmented-relaxed} that they are not equivalent methods.

We note here that there are strong connections between the Kaczmarz algorithm and frame theory \cite{KwMy01,Szw06,CzaTan13a,ChePow16a,HJW19a}.  Frames \cite{DS52a,DGM86a,Cas00a} have good stability properties with respect to additive noise:  if the noise is of finite energy (i.e. in $\ell^2$), then the frame reconstruction converges, and the error is proportional to the $\ell^2$-norm of the noise.  However, requiring the vectors $\{ \varphi_{n} \}$ to form a frame is a strong condition, and in general, stationary sequences do not form frames \cite{HW17a}.  Moreover, the Kaczmarz reconstruction has broad interest in applications \cite{GBH-70,EHL-81,SV-09a,NZZ15a,NSW16a,HKW19a}, and yields constructive proofs of the existence of Fourier series expansions for singular measures (\cite{Pol93,HW17a}, see also Subsection \ref{ssec:HS-BF}).

	\subsection{Series Representation of the Kaczmarz Algorithm} \label{ssec:SR-KA}
	
The Kaczmarz reconstruction given in Equation (\ref{eq:update}) can be expressed as a series, provided the sequence $\{ \vec{x}_{n} \}$ of approximations converges.  Following \cite{KwMy01}, we define the \emph{auxiliary sequence} $\{g_n\}_{n=0}^\infty$ by setting
\begin{align}
g_0 &= \varphi_0 \notag \\
g_n &= \varphi_n-\sum_{i=0}^{n-1}\langle \varphi_n,\varphi_i\rangle g_i. \label{eq:g_n}
\end{align}
It was shown in \cite{KwMy01} that
\[\vec{x}_n=\sum_{i=0}^n\langle \vec{x},g_i\rangle\varphi_i\]
and further that $\{ \varphi_{n} \}$ is effective if and only if $\{ g_{n} \}$ is a Parseval frame, meaning

\begin{align*}
\| \vec{x} \|^2 &= \sum_{n=0}^{\infty} | \langle \vec{x}, g_{n} \rangle |^2; \\
\vec{x} &= \sum_{n=0}^{\infty} \langle \vec{x}, g_{n} \rangle g_{n}.
\end{align*}
%\[\|x\|^2-\lim_{n\to\infty}\|x-\vec{x}_n\|^2=\sum_{n=0}^\infty|\langle x,g_n\rangle|^2\]
%from which we may conclude that $\{\varphi_n\}_{n=0}^\infty$ is effective if and only if $\{g_n\}_{n=0}^\infty$ forms a Parseval frame.

We may also define (as in \cite{HalSzw05}) the doubly-indexed sequence $\{\alpha_{n,j} : n \in \mathbb{N}_{0}, \ j = 0, \dots, n \}$ satisfying the property
\[\sum_{j=i}^n \alpha_{n,j}\langle \varphi_j,\varphi_i \rangle=\delta_{in}\]
for all $n\in \mathbb{N}$ and $i\leq n$.

We have that
\begin{align}
\lim_{N \to \infty} \vec{x}_{N} &= \sum_{n=0}^{\infty} \left( \sum_{j=0}^{n} \alpha_{n,j} \left( \langle \vec{x}, \varphi_{j} \rangle + \epsilon_{j} \right) \right) \varphi_{n} \label{eq:noisy-approximation} \\
&= \sum_{n=0}^{\infty} \left( \sum_{j=0}^{n} \alpha_{n,j}  \langle \vec{x}, \varphi_{j} \rangle \right) \varphi_{n} + \sum_{n=0}^{\infty} \left( \sum_{j=0}^{n} \alpha_{n,j}  \epsilon_{j} \right) \varphi_{n}  \label{eq:epsilon-sequence} \\
&=:  \vec{x}_{\infty} + E_{\epsilon}
\end{align}
and assuming both series on the right-hand side of Equation (\ref{eq:epsilon-sequence}) converge.  The identity in Equation (\ref{eq:noisy-approximation}) is a consequence of Lemma \ref{L:arbitrary} below.  If the sequence $\{ \varphi_{n} \}$ is effective, then the first series converges, and $\vec{x}_{\infty} = \vec{x}$.  Consequently:
\begin{Proposition}
Suppose the sequence $\{ \varphi_{n} \}$ is an effective sequence.  The sequence of approximations $\{ \vec{x}_{N} \}$ (with noise) converges if and only if the series $E_{\epsilon}$ converges.  Moreover, the reconstruction error is given by
\[ \lim_{N \to \infty} \| \vec{x}_{N} - \vec{x} \| = \| E_{\epsilon} \|. \]
\end{Proposition}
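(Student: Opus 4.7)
The plan is to reduce everything to the series decomposition already set up in Equations (\ref{eq:noisy-approximation})--(\ref{eq:epsilon-sequence}). The statement is essentially a linearity bookkeeping argument once the identity
\[
\vec{x}_{N} \;=\; \sum_{n=0}^{N} \left( \sum_{j=0}^{n} \alpha_{n,j} \bigl( \langle \vec{x}, \varphi_{j} \rangle + \epsilon_{j} \bigr) \right) \varphi_{n}
\]
is available. I would invoke Lemma \ref{L:arbitrary} (stated below in the paper) to justify this identity for the noisy update \eqref{eq:noise-update}: the coefficients $\alpha_{n,j}$ are defined solely by the Gram data $\langle \varphi_{j}, \varphi_{i}\rangle$, so the same combinatorial expansion that produces the noise-free formula applies verbatim when each true inner product $\langle \vec{x}, \varphi_{j}\rangle$ is replaced by the measured value $\langle \vec{x}, \varphi_{j}\rangle + \epsilon_{j}$.

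Next I would split the partial sum into a signal piece and a noise piece by linearity:
\[
\vec{x}_{N} \;=\; \underbrace{\sum_{n=0}^{N} \Bigl( \sum_{j=0}^{n} \alpha_{n,j} \langle \vec{x}, \varphi_{j} \rangle \Bigr) \varphi_{n}}_{=: S_{N}} \;+\; \underbrace{\sum_{n=0}^{N} \Bigl( \sum_{j=0}^{n} \alpha_{n,j} \epsilon_{j} \Bigr) \varphi_{n}}_{=: E_{N}}.
\]
By hypothesis $\{\varphi_{n}\}$ is effective, which (applied with the noise-free data $\langle \vec{x},\varphi_{j}\rangle$) means exactly that $S_{N} \to \vec{x}$ in $H$. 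Hence $\vec{x}_{N}$ converges in $H$ if and only if $E_{N}$ converges, and its limit is precisely $E_{\epsilon}$ as defined in \eqref{eq:epsilon-sequence}. This yields both directions of the stated equivalence.

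Finally, for the error formula, rearranging gives $\vec{x}_{N} - \vec{x} = (S_{N} - \vec{x}) + E_{N}$, and by the triangle inequality
\[
\bigl| \, \|\vec{x}_{N} - \vec{x}\| - \|E_{N}\| \, \bigr| \;\le\; \|S_{N} - \vec{x}\| \;\longrightarrow\; 0.
\]
Passing to the limit and using continuity of the norm together with $E_{N} \to E_{\epsilon}$ produces $\lim_{N\to\infty}\|\vec{x}_{N} - \vec{x}\| = \|E_{\epsilon}\|$, as claimed.

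The only non-routine point is the series identity, and even that is essentially quoted from Lemma \ref{L:arbitrary}; the rest is linearity and a one-line triangle inequality. So I do not anticipate a genuine obstacle here — the proposition is really the payoff of having set up the expansion in \eqref{eq:noisy-approximation}, and the main care is simply to make sure the effectiveness hypothesis is applied to the \emph{noise-free} signal series $S_{N}$ rather than to $\vec{x}_{N}$ itself.
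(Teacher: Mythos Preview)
Your proposal is correct and matches the paper's own reasoning: the paper does not give a separate proof of this proposition but states it as an immediate consequence of the series identity \eqref{eq:noisy-approximation} (justified by Lemma~\ref{L:arbitrary}) together with the convergence $\vec{x}_{\infty}=\vec{x}$ furnished by effectiveness. Your write-up simply spells out the partial-sum decomposition $\vec{x}_N=S_N+E_N$ and the triangle-inequality step that the paper leaves implicit.
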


We will show in Subsection \ref{ssec:R-SS} that under rather restrictive assumptions on the noise $\{\epsilon_{n} \}$, we can obtain convergence of the series in Equation \eqref{eq:epsilon-sequence}, and can bound the norm on $E_{\epsilon}$.  However, in general, a fairly weak assumption on $\{\epsilon_{n}\}$ is not sufficient to guarantee convergence (Proposition \ref{P:divergence}).  Then, in Subsection \ref{ssec:R-AS}, we will show that be utilizing Abel summation rather than standard summation, we can obtain stability in the Kaczmarz reconstruction under much weaker assumptions.  Indeed, we state a general theorem that illustrates our main results; see Theorems \ref{Th:epsilon-boundary} and \ref{Th:epsilon-bounded} for the precise statements and proofs.

\begin{Theorem} \label{Th:overview}
Suppose $\{ \varphi_{n} \}_{n=0}^{\infty}$ is a stationary sequence which is effective, with singular spectral measure.  Under appropriate conditions on $\{ \epsilon_{n} \}$ (e.g. $\sum \epsilon_{n} z^{n} \in H^{\infty}(\mathbb{D})$), we have
\begin{equation} \label{eq:AS-noise}
\lim_{r \to 1^{-}} \sum_{n=0}^{\infty} r^{n} \left( \sum_{j=0}^{n} \alpha_{n,j} \left( \langle \vec{x}, \varphi_{j} \rangle + \epsilon_{j} \right) \right) \varphi_{n} = \vec{x}.
\end{equation}
In other words, by using Abel summation, the error term $E_{\epsilon} = 0$.
\end{Theorem}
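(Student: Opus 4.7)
My plan is to reduce \eqref{eq:AS-noise} to a boundary-value calculation on $\mathbb{D}$ via the spectral representation of stationary sequences, and then exploit the mismatch between the Lebesgue-a.e.\ boundary behavior of bounded analytic functions and the singularity of the spectral measure $\mu$. The starting point is the identity $\sum_{j=0}^{n}\alpha_{n,j}\langle\vec{x},\varphi_{j}\rangle = \langle\vec{x},g_{n}\rangle$, which is immediate from \eqref{eq:g_n} together with the defining relation for $\alpha_{n,j}$. Using this, I split the Abel sum into a signal part $S^{(r)}:=\sum_{n}r^{n}\langle\vec{x},g_{n}\rangle\varphi_{n}$ and a noise part $N^{(r)}:=\sum_{n}r^{n}c_{n}\varphi_{n}$, where $c_{n}:=\sum_{j=0}^{n}\alpha_{n,j}\epsilon_{j}$. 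By effectivity of $\{\varphi_{n}\}$ and Abel summability of the Kaczmarz/Parseval-frame expansion for singular $\mu$ (developed in \cite{HW17a,HJW18a}), $\lim_{r\to 1^{-}}S^{(r)}=\vec{x}$. Consequently, the theorem reduces to $\lim_{r\to 1^{-}}N^{(r)}=0$ in $H$.

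To handle the noise part, I invoke the Kwapien--Mycielski theorem and the Herglotz representation to identify $H$ isometrically with $L^{2}(\mu)$ via $\varphi_{n}\leftrightarrow z^{n}$. Under this identification, $N^{(r)}$ becomes the radial dilate at radius $r$ of the formal power series $C(z):=\sum_{n}c_{n}z^{n}$. The key observation is that the $c_{n}$ are precisely the Kaczmarz coefficients that would be generated by the ``fictional'' measurement sequence $\{\epsilon_{n}\}$; at the level of generating functions, $C$ is the Cauchy product of $F_{\epsilon}(z):=\sum_{n}\epsilon_{n}z^{n}$ with a factor determined by the inner function $b$ associated (via Herglotz/Clark) with $\mu$. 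The hypothesis $F_{\epsilon}\in H^{\infty}(\mathbb{D})$, combined with the Normalized Cauchy Transform description of $L^{2}(\mu)$ \cite{Clark72,Pol93,HW17a}, then ensures that $C$ extends to a bounded analytic function on $\mathbb{D}$, uniformly controlled in $r$ by $\|F_{\epsilon}\|_{\infty}$.

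To conclude, I apply classical boundary-value results: bounded analytic functions on $\mathbb{D}$ have non-tangential limits Lebesgue-a.e.\ on $\mathbb{T}$, and the singularity of $\mu$ with respect to Lebesgue measure forces the corresponding limits, pulled back into $L^{2}(\mu)$ via the Cauchy-transform framework, to vanish $\mu$-a.e. A dominated-convergence argument in $L^{2}(\mu)$, with the $H^{\infty}$-bound supplying the dominating function, then yields $\|C(r\,\cdot\,)\|_{L^{2}(\mu)}\to 0$ as $r\to 1^{-}$, which is the desired conclusion. The main obstacle I anticipate is the middle step: rigorously identifying the analytic extension of $C$ on $\mathbb{D}$ and verifying that the Clark/Cauchy-transform machinery correctly transports the Lebesgue-a.e.\ boundary data of $F_{\epsilon}$ into $\mu$-boundary data of $N^{(r)}$. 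The $H^{\infty}$ hypothesis is exactly what makes a clean dominated-convergence argument available; weakening it to $H^{2}$-type conditions on the noise presumably demands the more delicate arguments that are reflected in the separate statements of Theorems \ref{Th:epsilon-boundary} and \ref{Th:epsilon-bounded}.
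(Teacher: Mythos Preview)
Your overall architecture matches the paper's: split into signal and noise parts, handle the signal by Abel summability of the convergent Kaczmarz expansion, pass to the spectral side $L^{2}(\mu)$, and identify the noise part with the radial dilates of $C(z)=(1-b(z))\varepsilon(z)$. That much is correct.

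The gap is in your final paragraph. The assertion that ``bounded analytic on $\mathbb{D}$ plus $\mu$ singular forces the $\mu$-a.e.\ boundary limits to vanish'' is false as stated, and the appeal to ``the Cauchy-transform framework'' does not repair it. The constant function $C\equiv 1$ is bounded analytic and its radial limits are identically $1$ on any singular $\mu$; nothing in the Clark/Poltoratski machinery makes a generic bounded analytic function vanish $\mu$-a.e. The Lebesgue-a.e.\ existence of non-tangential limits is irrelevant here precisely because $\mu$ is singular: those limits live on a $\mu$-null set and tell you nothing about $\mu$-boundary behavior.

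What actually drives the vanishing is the \emph{specific} factor $1-b$, not mere boundedness of $C$. The Herglotz correspondence gives the pointwise fact $\lim_{r\to 1^{-}} b(re^{2\pi i x})=1$ for $\mu$-a.e.\ $x$, with $|1-b|\le 2$ uniformly. Hence $|C(re^{2\pi i x})|=|1-b(re^{2\pi i x})|\,|\varepsilon(re^{2\pi i x})|\le 2\|\varepsilon\|_{\infty}$ and $|C(re^{2\pi i x})|\to 0$ $\mu$-a.e., so dominated convergence in $L^{2}(\mu)$ gives $\|C(r\,\cdot\,)\|_{L^{2}(\mu)}\to 0$. This is exactly the argument in the paper (Lemma~\ref{L:epsilon-bounded} and Theorem~\ref{Th:epsilon-bounded}). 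Your outline becomes a proof once you replace the vague boundary-value claim with this explicit use of $b^{\star}=1$ $\mu$-a.e.
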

We will also show in Subsection \ref{ssec:AS-AKA} that the Abel summation of the series in Equation (\ref{eq:AS-noise}) can be represented as an augmented Kaczmarz algorithm with relaxation parameter $r$.

%	\subsection{The Hardy Space, Boundary Functions, and the Kaczmarz Algorithm} \label{ssec:HS-BF}

	\subsection{Spectral Representation of the Kaczmarz Algorithm} \label{ssec:HS-BF}
	
	When the effective sequence $\{ \varphi_{n} \}$ is stationary, it possesses a spectral measure $\mu$ on $[0,1]$ as a result of Bochner's theorem.  The mapping $\varphi_{n} \mapsto e^{2 \pi i n x}$ then extends to an isometry from $H$ to $L^2(\mu)$; moreover, this isometry is a unitary when $\mu$ is singular.  The Kwapien-Mycielski theorem states that if the sequence $\{ \varphi_{n} \}$ is stationary and has dense span in $H$, then the following conditions are equivalent:  (i) the sequence is effective; (ii) the spectral measure $\mu$ is either Lebesgue measure or purely singular.
	
	Recent results elucidated the existence of a Fourier series for singular measures \cite{HW17a}, wherein the exponentials of positive-integer frequencies form a linearly dense collection of vectors for $L^2(\mathbb{T},\mu)$, where $\mu$ is a singular measure and $\mathbb{T}$ is the one-dimensional unit circle of the complex plane, parameterized by $e^{2\pi i x}$. (For the rest of this paper, we use the notation $L^2(\mu)=L^2(\mathbb{T},\mu)$, with the understanding that all functions of a single real variable are defined to be periodic.)
	
	It is well known that the Hardy space on the unit disc, $H^2(\mathbb{D})$ (notated as simply $H^2$), and $L^2_+(m)=\{f:f\in L^2(m) \text{ and }f(x)=\sum_{n=0}^\infty c_n e^{i2\pi nx}\text{ for some }c_n\in\ell^2\}$ (Lebesgue measure $m$) are isomorphic Hilbert spaces. This isomorphism is given by a change of variables $z\mapsto e^{2\pi i x}$ in the power series representation of $H^2$ functions. This isomorphism may also be thought of as the values the Hardy space function takes on the boundary. That is, for $f\in H^2$, and $z=re^{2\pi i x}$
	
	\begin{displaymath}
	\lim_{r\to 1^-}f(r e^{2\pi i x})=f^\star(x)\text{ for some }f^\star\in L^2(m).
	\end{displaymath}
	We call $f^\star$ the \textit{boundary function} of $f$. With respect to Lebesgue measure, for each $f\in H^2$, there is a guaranteed $f^\star\in L^2(m)$, as the two spaces are isomorphic. Subtleties occur when dealing with singular measures, however.	
	
	A similar isomorphism may be given between $L^2(\mu)$ and some subspace of $H^2$ \cite{HJW16a}. This subspace is most-easily described as the range of the \textit{normalized Cauchy transform}:
	
	\begin{displaymath}
	V_\mu f(z)=\frac{1}{\mu^+(z)}\int_0^1\frac{f(x)}{1-ze^{-2\pi i x}}d\mu(x) \text{ for }f\in L^2(\mu)
	\end{displaymath}
	where $\mu^+(z)$ is the unnormalized \textit{Cauchy transform} of the constant function $1$:
	
	\begin{displaymath}
	\mu^+(z)=\int_0^1\frac{1}{1-ze^{-2\pi i x}}d\mu(x).
	\end{displaymath}
	
Since we seek the Fourier series for this singular measure, we denote by 
	\begin{displaymath}
	\widehat{f}(j)=\int_0^1 f(x)e^{-2\pi i j x}d\mu(x)
	\end{displaymath}
the Fourier-Stieltjes transform of $f$. We call the sequence $\{\widehat{f}(j)\}$ the \textit{Fourier moments} of $f$. The Fourier moments of the constant function $\mathbf{1}(z)$ are notated as $\widehat{\mu}(j)$. These have the property that
	\begin{displaymath}
	\mu^+(z)=\sum_{n=0}^\infty \widehat{\mu}(n)z^n.
	\end{displaymath}

We also consider the \textit{Herglotz representation} of singular measures and their corresponding inner functions:
	
	\begin{displaymath}
	\text{Re}\left(\frac{1+b(z)}{1-b(z)}\right)=\int_0^1\frac{1-|z|^2}{|e^{2\pi i x}-z|^2}d\mu(x)
	\end{displaymath}
	where the function $b\in H^2$ is unique for each singular measure $\mu$. We refer to the function $b$ as \emph{corresponding to the singular measure} $\mu$; it has the properties that $b(0)=0$, $\lim_{r\to 1^-}|b(re^{2\pi i x})|=1$ $m-$almost everywhere, and $\lim_{r\to 1^-}b(re^{2\pi i x})=1$ $\mu-$almost everywhere \cite{Aleks89a,HerrThesis}. Additionally, we have
	\begin{displaymath}
	\mu^+(z)=\frac{1}{1-b(z)}\iff b(z)=1-\frac{1}{\mu^+(z)}.
	\end{displaymath}
	For $\varphi_n=e^{2\pi i n}$ we now recognize
	\begin{equation} \label{eq:alphas}
	1-b(z)=\sum_{n=0}^\infty \alpha_n z^n \iff b(z)=-\sum_{n=1}^\infty \alpha_n z^n.
	\end{equation}
	The following result from \cite{HW17a} regarding the sequence of Fourier moments summarizes much of the above:
\begin{thmalpha}\label{fourierserries} Assume $\mu$ is a singular Borel probability measure on $[0,1)$ and $\{\alpha_j\}$ is the corresponding sequence of coefficients given in Equation \eqref{eq:alphas}.  Then
\[f(x)=\sum_{n=0}^{\infty}\left(\sum_{j=0}^{n}\alpha_{n-j}\widehat{f}(j)\right)e^{2\pi inx},\]
where the convergence is in the $L^2(\mu)$-norm.
\end{thmalpha}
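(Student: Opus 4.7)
The plan is to apply the series representation of the Kaczmarz algorithm developed earlier in the paper to the stationary sequence $\varphi_n(x) = e^{2\pi i n x}$ in the Hilbert space $H = L^2(\mu)$. First, I would observe that the inner products $\langle \varphi_n, \varphi_m\rangle_{L^2(\mu)} = \widehat{\mu}(n-m)$ depend only on $n-m$, so $\{\varphi_n\}_{n=0}^\infty$ is stationary and its spectral measure is exactly $\mu$. Since $\mu$ is singular and the collection $\{e^{2\pi i n x}\}_{n \geq 0}$ is linearly dense in $L^2(\mu)$ (the result of \cite{HW17a} cited above), the Kwapien--Mycielski theorem gives that $\{\varphi_n\}$ is effective. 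Hence the Kaczmarz approximations converge to $f$ in $L^2(\mu)$-norm; with the noiseless measurements $\langle f, \varphi_j\rangle = \widehat{f}(j)$, Equation (\ref{eq:noisy-approximation}) (with $\epsilon_j \equiv 0$) yields
\[
f = \lim_{N \to \infty} \sum_{n=0}^{N} \left( \sum_{j=0}^{n} \alpha_{n,j}\, \widehat{f}(j) \right) e^{2\pi i n x}
\]
in $L^2(\mu)$, where $\{\alpha_{n,j}\}$ is the doubly-indexed sequence from the general framework.

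It then remains to identify $\alpha_{n,j} = \alpha_{n-j}$ where $\alpha_k$ is the $k$th Taylor coefficient of $1-b(z)$ as in Equation (\ref{eq:alphas}). Here, the defining relation $\sum_{j=i}^n \alpha_{n,j}\langle \varphi_j, \varphi_i\rangle = \delta_{in}$ becomes, by stationarity, the lower-triangular convolution system
\[
\sum_{k=0}^{m} \alpha_{n,\,n-k}\, \widehat{\mu}(m-k) = \delta_{m,0}, \qquad m := n - i \in \{0, 1, \ldots, n\}.
\]
Since $\mu$ is a probability measure, $\widehat{\mu}(0) = 1 \neq 0$, so this Toeplitz system has a unique solution depending only on $m-k$. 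Consequently I may set $a_k := \alpha_{n,n-k}$ without $n$-dependence, giving $\alpha_{n,j} = a_{n-j}$.

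The main obstacle, and the key identification, is then to show $a_k = \alpha_k$. I would formalize this as an identity of formal power series: the above relation is precisely $\bigl(\sum_{k=0}^\infty a_k z^k\bigr)\cdot \mu^+(z) = 1$. By the Herglotz representation recalled in Subsection \ref{ssec:HS-BF}, $\mu^+(z) = 1/(1-b(z))$, so
\[
\sum_{k=0}^\infty a_k z^k = 1 - b(z) = \sum_{k=0}^{\infty} \alpha_k z^k,
\]
and uniqueness of power-series coefficients gives $a_k = \alpha_k$. Substituting $\alpha_{n,j} = \alpha_{n-j}$ into the Kaczmarz series delivers the claimed formula with $L^2(\mu)$-convergence. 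I expect the most delicate point to be the verification that the Toeplitz system really does force $\alpha_{n,j}$ to be a function of $n-j$ alone (so that the formal power series manipulation is legitimate); this is where the singularity of $\mu$ plays no role, but the probability normalization $\widehat{\mu}(0)=1$ is essential.
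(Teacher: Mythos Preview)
The paper does not actually prove this statement: it is quoted as a result from \cite{HW17a} and stated without proof. So there is no in-paper argument to compare against. That said, your outline is precisely the Kaczmarz-based argument the paper's framework is built around, and it matches the strategy of \cite{HW17a} as summarized in the introduction (apply Kwapien--Mycielski to the stationary sequence $\varphi_n=e^{2\pi i n x}$, then read off the series via Lemma~\ref{L:arbitrary} with $c_j=\widehat f(j)$).

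One genuine logical issue: you invoke linear density of $\{e^{2\pi i n x}\}_{n\ge0}$ in $L^2(\mu)$ by citing ``the result of \cite{HW17a} cited above,'' but the result cited above \emph{is} Theorem~\ref{fourierserries}, and linear density is an immediate corollary of it. As written this is circular. The standard independent argument is the F.\ and M.\ Riesz theorem: if $g\in L^2(\mu)$ is orthogonal to $e^{2\pi i n x}$ for all $n\ge0$, then $g\,d\mu$ has vanishing nonnegative Fourier coefficients, hence is absolutely continuous; singularity of $\mu$ forces $g=0$. You should insert this (or another direct proof of density) before appealing to Kwapien--Mycielski.

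A minor point on the Toeplitz computation: with the paper's convention $\widehat\mu(j)=\int e^{-2\pi i j x}\,d\mu$, one has $\langle\varphi_j,\varphi_i\rangle=\widehat\mu(i-j)$, so the convolution identity comes out as $\sum_{k=0}^{m}\alpha_{n,n-k}\,\widehat\mu(k-m)=\delta_{m,0}$ rather than with $\widehat\mu(m-k)$. This does not affect your conclusion that $\alpha_{n,j}$ depends only on $n-j$, and after tracking the conjugation the identification with the Taylor coefficients of $1-b$ still goes through, but the displayed formula should be corrected.
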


	\section{Main Results}
	
We assume throughout this section that $\{ \varphi_{n} \}_{n=0}^{\infty}$ is a linearly dense stationary sequence with singular spectral measure $\mu$.  Thus, the sequence is an effective sequence.  Moreover, we assume throughout that $b$ is the inner function corresponding to $\mu$ via the Herglotz representation, and thus the Normalized Cauchy Transform $V_{\mu} : L^2(\mu) \to H^2 \ominus bH^2$ is a unitary transformation.  We will also repeatedly refer to the sequence $\{ \alpha_{n} \}$ as defined in Equation \eqref{eq:alphas}.

Let us note that if the spectral measure $\mu$ is neither singular nor Lebesgue, then the sequence is not effective \cite{KwMy01} and so the question of stability is not well formulated in this case.  If the spectral measure is Lebesgue measure, then the sequence $\{ \varphi_{n} \}$ is an orthonormal basis, so we immediately obtain that the reconstruction error is $\| E_{\epsilon} \|^2 = \sum_{n=0}^{\infty} | \epsilon_{n}|^2$.
	
Since the sequence $\{ \varphi_{n} \} \subset H$ and $\{ e^{2 \pi i n x} \} \subset L^2(\mu)$ are unitarily equivalent, we will pass between them interchangeably. For the noise sequence $\{ \epsilon_{n} \}$ we define the formal series $\varepsilon(z) = \sum_{n=0}^{\infty} \epsilon_{n} z^{n}$; throughout this section we assume this series has radius of convergence at least $1$.

\subsection{The Kaczmarz Algorithm from Arbitrary Inputs} \label{ssec:AP}

The following provides the justification of the Kaczmarz reconstruction as a series representation.

\begin{Lemma} \label{L:arbitrary}
		For $\{c_n\}_{n=0}^\infty \subset \mathbb{C}$ we have by the Kaczmarz update: 
		\begin{displaymath}
		\vec{x}_N=\vec{x}_{N-1}+c_N\varphi_{N}-\langle \vec{x}_{N-1},\varphi_N\rangle \varphi_{N}
		\end{displaymath}
		but also: 
		\begin{displaymath}
		\vec{x}_N=\sum_{n=0}^N\bigg(\sum_{j=0}^n\alpha_{n-j}c_j\bigg)\varphi_n.
		\end{displaymath}
		Hence, the Kaczmarz reconstruction from an arbitrary sequence can be represented as a series.
	\end{Lemma}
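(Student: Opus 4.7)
The plan is to proceed by induction on $N$, exploiting the fact that by stationarity the inner products $\langle \varphi_n, \varphi_m \rangle$ depend only on $m-n$, so that the doubly-indexed sequence $\{\alpha_{n,j}\}$ collapses to the single-indexed sequence $\{\alpha_{n-j}\}$ appearing in Equation (\ref{eq:alphas}).

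First, I would record the key algebraic identity that drives the induction. Since $\mu^+(z) = \sum_{n \geq 0} \widehat{\mu}(n) z^n$ and $1 - b(z) = \sum_{n \geq 0} \alpha_n z^n$ satisfy $\mu^+(z)(1-b(z)) = 1$, comparing Cauchy products gives $\alpha_0 = 1$ and, for $m \geq 1$,
\[
\alpha_m = -\sum_{k=0}^{m-1} \alpha_k \, \widehat{\mu}(m-k).
\]
Moreover, stationarity together with the unitary identification $\varphi_n \leftrightarrow e^{2\pi i n x}$ shows that $\langle \varphi_n, \varphi_N \rangle = \langle \varphi_0, \varphi_{N-n} \rangle = \overline{\langle \varphi_{N-n}, \varphi_0 \rangle} = \widehat{\mu}(N-n)$, converting the structural recursion into one that can be matched with the Kaczmarz update.

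For the base case $N = 0$, starting from $\vec{x}_{-1} = 0$ the update gives $\vec{x}_0 = c_0 \varphi_0$, which matches the right-hand side $\alpha_0 c_0 \varphi_0$ since $\alpha_0 = 1$. For the inductive step I assume the series representation holds for $\vec{x}_{N-1}$ and write $d_n := \sum_{j=0}^n \alpha_{n-j} c_j$, so that the conclusion reduces to proving
\[
c_N - \langle \vec{x}_{N-1}, \varphi_N \rangle = d_N.
\]
Since $d_N = c_N + \sum_{j=0}^{N-1} \alpha_{N-j} c_j$, it suffices to show $\langle \vec{x}_{N-1}, \varphi_N \rangle = -\sum_{j=0}^{N-1} \alpha_{N-j} c_j$. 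Expanding the inner product with $\langle \varphi_n, \varphi_N \rangle = \widehat{\mu}(N-n)$, swapping the order of summation, and making the substitutions $k = n-j$ and $m = N-j$, the required identity collapses to
\[
\alpha_m + \sum_{k=0}^{m-1} \alpha_k \, \widehat{\mu}(m-k) = 0, \qquad 1 \leq m \leq N,
\]
which is exactly the Cauchy-product recursion recorded above.

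The only real obstacle is notational: one must carefully convert between the stationarity identity $\langle \varphi_n, \varphi_m\rangle = \langle \varphi_0, \varphi_{m-n}\rangle$ and the moments $\widehat{\mu}(k)$, which are defined with a minus sign in the exponent, keeping the complex conjugations straight. Once this bookkeeping is clean, the induction proceeds by a single application of the coefficient relation coming from $(1-b(z))\mu^+(z) = 1$, and no further analytic input is needed.
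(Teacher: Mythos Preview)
Your proposal is correct and takes essentially the same approach as the paper's own proof: both proceed by induction on $N$ and reduce the inductive step to the single algebraic identity
\[
-\sum_{n=0}^{N-1}\Big(\sum_{j=0}^n \alpha_{n-j}c_j\Big)\langle \varphi_n,\varphi_{N}\rangle=\sum_{j=0}^{N-1}\alpha_{N-j}\, c_j,
\]
which is exactly what the paper records (citing ``the definition of $\{\alpha_k\}$''). You have simply made that step more explicit by unpacking $\langle \varphi_n,\varphi_N\rangle=\widehat{\mu}(N-n)$ and reading the identity off the Cauchy product $(1-b(z))\mu^+(z)=1$; this is the same content, just with the bookkeeping spelled out.
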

	\begin{proof}
		The proof follows by induction. The initial case is trivial. Thus suppose
	\begin{displaymath}
	\begin{split}
	\vec{x}_N&=\vec{x}_{N-1}+c_N \varphi_{N}-\langle \vec{x}_{N-1},\varphi_{N}\rangle \varphi_{N}\\
	&=\sum_{n=0}^N \left(\sum_{j=0}^n\alpha_{n-j}c_j\right)\varphi_{n}.
	\end{split}
	\end{displaymath}
	By Kaczmarz we have
	\begin{displaymath}
	\begin{split}
	\vec{x}_{N+1}&=\vec{x}_N+c_{N+1} \varphi_{N+1}-\langle \vec{x}_N,\varphi_{N+1}\rangle \varphi_{N+1}\\
	&=\vec{x}_N+c_{N+1}\varphi_{N+1}-\bigg\langle\sum_{n=0}^N \left(\sum_{j=0}^n\alpha_{n-j}c_j\right)\varphi_n,\varphi_{N+1}\bigg\rangle \varphi_{N+1}\\
	&=\vec{x}_N+c_{N+1}\varphi_{N+1}-\sum_{n=0}^N \left(\sum_{j=0}^n\alpha_{n-j}c_j\right)\langle \varphi_n,\varphi_{N+1}\rangle \varphi_{N+1}.
	\end{split}
	\end{displaymath}
Hence,
	\begin{displaymath}
%	\begin{split}
	\vec{x}_{N+1} =\sum_{n=0}^N \left(\sum_{j=0}^n\alpha_{n-j}c_j\right)\varphi_n+c_{N+1}\varphi_{N+1} 
	  -\sum_{n=0}^N \left(\sum_{j=0}^n\alpha_{n-j}c_j\right)\langle \varphi_n,\varphi_{N+1}\rangle \varphi_{N+1}
%	\end{split}
	\end{displaymath}
	We note the identity (given by the definition of $\{\alpha_k\}_{k=0}^\infty$ in Equation \eqref{eq:alphas})
	\begin{displaymath}
	-\sum_{n=0}^N\left(\sum_{j=0}^n \alpha_{n-j}c_j\right)\langle \varphi_n,\varphi_{N+1}\rangle=\sum_{j=0}^{N}\alpha_{N-j} c_j
	\end{displaymath}
	Thus we may write
	\begin{align*}
%	\begin{split}
	\vec{x}_{N+1}&=\sum_{n=0}^N \left(\sum_{j=0}^n\alpha_{n-j}c_j\right)\varphi_n+c_{N+1}\varphi_{N+1} 
	 +\left(\sum_{j=0}^{N}\alpha_{N-j} c_j\right)\varphi_{N+1} \\
	 &=\sum_{n=0}^{N+1} \left(\sum_{j=0}^n\alpha_{n-j}c_j\right)\varphi_n
%	\end{split}
	\end{align*}
	\end{proof}

\subsection{Reconstruction using Standard Summation} \label{ssec:R-SS}

We begin by considering the reconstruction with noise using the auxiliary sequence $\{g_{n}\}$ as in Equation \eqref{eq:g_n}.  This does not fit within the iterative framework of the Kaczmarz algorithm, but we present it here to illustrate the distinctions between frame reconstructions and the Kaczmarz reconstruction.  In the noiseless case, we have by the Parseval frame condition on the auxiliary sequence that
\begin{equation} \label{Eq:noiseless-aux}
\vec{x} = \sum \langle \vec{x}, g_{n} \rangle g_{n} = \sum_{n=0}^{\infty} \left( \sum_{j=0}^{n} \alpha_{n-j} \langle \vec{x}, \varphi_{j} \rangle \right) g_{n}.
\end{equation}
Therefore, the error introduced by the noise is
\begin{equation} \label{Eq:g_n-error}
    E_{\epsilon,g} = \sum_{n=0}^{\infty} \left( \sum_{j=0}^{n} \alpha_{n-j}\epsilon_{j} \right) g_{n}
\end{equation}
provided the series converges.  If $\{ \epsilon_{n} \} \in \ell^2$, then $\{ \sum_{j=0}^{n} \alpha_{n-j} \epsilon_{j} \} \in \ell^2$ also, since
\begin{equation} \label{Eq:1-be}
    \sum_{n=0}^{\infty} \left( \sum_{j=0}^{n} \alpha_{n-j}\epsilon_{j} \right) z^n = (1-b(z)) \varepsilon(z) \in H^2(\mathbb{D}),
\end{equation}
and the norm satisfies the trivial estimate $\| (1 - b(z)) \varepsilon(z) \| \leq 2 \| \varepsilon(z) \|$.   Thus, we obtain:

\begin{Lemma}
If $\{\epsilon_n\}_{n=0}^\infty\in \ell^2$, then $\vec{y} = \sum_{n=0}^\infty \left( \sum_{j=0}^{n} \alpha_{n-j} ( \langle \vec{x} , \varphi_{j} \rangle + \epsilon_{j} ) \right) g_n$ converges, and 
\begin{equation*}
\| \vec{y} - \vec{x} \| \leq 2\|\epsilon_n\|_2
\end{equation*}
\end{Lemma}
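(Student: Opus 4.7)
The plan is to decompose $\vec{y}$ into the noiseless reconstruction plus an error term, then bound the error using the Parseval frame property of $\{g_n\}$ combined with the Hardy space factorization already noted in Equation (\ref{Eq:1-be}). Specifically, by linearity of the defining sum,
\[
\vec{y} \;=\; \sum_{n=0}^{\infty}\!\left(\sum_{j=0}^{n}\alpha_{n-j}\langle\vec{x},\varphi_j\rangle\right)\!g_n \;+\; \sum_{n=0}^{\infty}\!\left(\sum_{j=0}^{n}\alpha_{n-j}\epsilon_j\right)\!g_n,
\]
and the first summand equals $\vec{x}$ by Equation (\ref{Eq:noiseless-aux}); it therefore suffices to show that the second summand $E_{\epsilon,g}$ converges in $H$ with $\|E_{\epsilon,g}\|\le 2\|\epsilon\|_2$.

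Next I would set $\eta_n := \sum_{j=0}^{n}\alpha_{n-j}\epsilon_j$ and argue that $\{\eta_n\}\in\ell^2$ with $\|\{\eta_n\}\|_{\ell^2}\le 2\|\epsilon\|_2$. This is exactly the content of Equation (\ref{Eq:1-be}): the generating function of $\{\eta_n\}$ is the product $(1-b(z))\varepsilon(z)$. Since $\epsilon\in\ell^2$ gives $\varepsilon\in H^2$, and since $b$ is an inner function so that $\|b\|_{H^\infty}\le 1$ and hence $\|1-b\|_{H^\infty}\le 2$, the product lies in $H^2$ with
\[
\|\{\eta_n\}\|_{\ell^2} \;=\; \|(1-b)\varepsilon\|_{H^2} \;\le\; \|1-b\|_{H^\infty}\,\|\varepsilon\|_{H^2} \;\le\; 2\|\epsilon\|_{2}.
\]

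Finally, I would invoke the Parseval frame property of the auxiliary sequence $\{g_n\}$, which was recorded in Subsection \ref{ssec:SR-KA}: the synthesis operator $T:\ell^2\to H$, $T\{c_n\}=\sum c_n g_n$, is a coisometry (equivalently, $\{g_n\}$ Parseval implies $T$ has operator norm $\le 1$). Applying $T$ to $\{\eta_n\}$ shows that $E_{\epsilon,g}=\sum_n\eta_n g_n$ converges in $H$ and
\[
\|E_{\epsilon,g}\| \;=\; \|T\{\eta_n\}\| \;\le\; \|\{\eta_n\}\|_{\ell^2} \;\le\; 2\|\epsilon\|_2.
\]
Combining the three steps yields $\|\vec{y}-\vec{x}\|=\|E_{\epsilon,g}\|\le 2\|\epsilon\|_2$, as claimed. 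There is no real obstacle here: the only subtle point is recognizing that $(1-b)$ is a multiplier on $H^2$ with norm at most $2$, which is immediate from $b$ being inner.
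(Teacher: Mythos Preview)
Your proof is correct and follows essentially the same approach as the paper: the paper's argument (given in the text immediately preceding the Lemma) is precisely that the convolution coefficients are those of $(1-b)\varepsilon\in H^2$ with the trivial multiplier bound $\|(1-b)\varepsilon\|_{H^2}\le 2\|\varepsilon\|_{H^2}$, combined with the Parseval frame property of $\{g_n\}$. You have simply made the final synthesis-operator step explicit, which the paper leaves to the reader.
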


The following is well-known concerning pseudocontinuable functions \cite{Clark72,Nik86a,Aleks89a,Pol93}, though we include the proof in the Appendix.

\begin{Proposition} \label{P:divergence}
For a singular measure $\mu$, there exists a sequence $\{\epsilon_{n} \} \in \ell^2$ such that the series $\sum \epsilon_{n} e^{2 \pi i n x}$ diverges in $L^2(\mu)$.  Moreover, there exists a sequence $\{ \epsilon_{n} \} \in \ell^2$ such that $\sum_{n=0}^{\infty} \left( \sum_{j=0}^{n} \alpha_{n-j} \epsilon_{j} \right) e^{2 \pi i n x}$ diverges in $L^2(\mu)$.
\end{Proposition}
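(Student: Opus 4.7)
Both parts will be established via the uniform boundedness (Banach--Steinhaus) principle, leveraging the Herglotz correspondence between $\mu$ and the inner function $b$, the identity $(1-b)\mu^+ = 1$, and the normalized Cauchy transform $V_\mu \colon L^2(\mu) \to K_b := H^2 \ominus bH^2$.

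For Part 1, define the bounded partial-sum operators $T_N \colon \ell^2 \to L^2(\mu)$ by $T_N(\{\epsilon_n\}) = \sum_{n=0}^N \epsilon_n e^{2\pi i n x}$. I would argue by contradiction: if the series converged in $L^2(\mu)$ for every $\{\epsilon_n\} \in \ell^2$, then the Banach--Steinhaus principle would yield $\sup_N \|T_N\| < \infty$, and by duality $\{e^{2\pi i n x}\}_{n \geq 0}$ would be a Bessel sequence in $L^2(\mu)$. Applied to the constant function $\mathbf{1} \in L^2(\mu)$, this would give $\sum_{n \geq 0} |\widehat\mu(n)|^2 < \infty$; combined with the conjugate symmetry $\widehat\mu(-n) = \overline{\widehat\mu(n)}$ of a positive real measure, we would obtain $\{\widehat\mu(n)\}_{n \in \mathbb{Z}} \in \ell^2$. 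By Riesz--Fischer and the uniqueness of a measure determined by its Fourier coefficients, $d\mu = \phi\, dm$ for some $\phi \in L^2(m)$, contradicting singularity. The contrapositive form of Banach--Steinhaus then supplies the required divergent sequence.

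For Part 2, I would take the explicit choice $\epsilon_n = \delta_{n,0}$, so that $\{\epsilon_n\} \in \ell^2$ and $\sum_{j=0}^n \alpha_{n-j}\epsilon_j = \alpha_n$. The candidate series becomes $\sum_{n=0}^\infty \alpha_n e^{2\pi i n x}$, i.e.\ the boundary partial sums $1 - b_N(e^{2\pi i x})$ of $1-b(z)$, where $b_N$ is the $N$th Taylor polynomial of $b$. Since $b^*(e^{2\pi i x}) = 1$ $\mu$-almost everywhere, convergence in $L^2(\mu)$ would force $1 - b_N \to 0$ in $L^2(\mu)$, i.e., $\int |1 - b_N|^2\, d\mu \to 0$. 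Although $b_N \to b^*$ in $L^2(m)$, the singularity of $\mu$ with respect to Lebesgue measure prevents transferring this convergence to the support of $\mu$: the pseudocontinuation machinery developed in \cite{Clark72, Nik86a, Aleks89a, Pol93} shows that for any singular measure $\mu$ of infinite support, $\int |1 - b_N|^2\, d\mu$ stays bounded away from $0$.

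The main technical obstacle is making this last pseudocontinuation statement precise. One route is to apply the Banach--Steinhaus principle directly to the Kaczmarz partial-sum operators $S_N \colon \ell^2 \to L^2(\mu)$, $S_N(\{\epsilon_n\}) = \sum_{n=0}^N \bigl(\sum_{j=0}^n \alpha_{n-j}\epsilon_j\bigr) e^{2\pi i n x}$, and exhibit a test function $f \in L^2(\mu)$ for which $\|S_N^* f\|_{\ell^2}$ is unbounded; the pseudocontinuability of $V_\mu f$ via the model-space structure of $K_b$, together with the crucial observation that $\mu^+ \notin H^2$ for singular $\mu$ (the very fact powering Part 1), then forces the failure of any dual Bessel-type bound, producing the contradiction and the desired divergent $\{\epsilon_n\}$.
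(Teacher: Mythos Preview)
Your Part~1 argument via Banach--Steinhaus is correct and self-contained. The paper does not argue this way: it simply observes that the first statement follows from the second (since $\{\sum_{j=0}^n \alpha_{n-j}\epsilon_j\}_n$ is itself an $\ell^2$ sequence, being the Taylor coefficients of $(1-b)\varepsilon \in H^2$), and gives an alternative citation. Your route has the merit of being elementary and independent of Part~2.

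Your Part~2, however, has a genuine gap: the explicit choice $\epsilon_n = \delta_{n,0}$ does \emph{not} produce a divergent series. With this choice the series in question is the Fourier series of $1-b$. Since $b(0)=0$, one checks directly that both $1$ and $b$ are orthogonal to $b^2 H^2$, so $1-b \in H^2 \ominus b^2 H^2$. The paper's Lemma~\ref{L:FS-convergence} (with $K=2$) then guarantees that $\sum_{n=0}^N \alpha_n e^{2\pi i n x}$ \emph{converges} in $L^2(\mu)$; in fact it converges to $(1-b)^\star = 0$ because $b^\star = 1$ $\mu$-a.e. So your claim that ``$\int |1-b_N|^2\, d\mu$ stays bounded away from $0$'' is false, and the cited pseudocontinuation machinery cannot rescue it---that machinery is precisely what underlies Lemma~\ref{L:FS-convergence}. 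More broadly, any choice with $\varepsilon \in H^\infty$ (or even $(1-b)\varepsilon \in H^2 \ominus b^K H^2$ for some $K$) will fail for the same reason, by Theorem~\ref{T:Wold}.

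Your fallback Banach--Steinhaus sketch for Part~2 is too vague to constitute a proof: you would need to exhibit $f \in L^2(\mu)$ with $\sup_N \|S_N^* f\|_{\ell^2} = \infty$, and the asserted link between ``$\mu^+ \notin H^2$'' and unboundedness of $S_N^*$ is not spelled out. The paper's own proof invokes Fatou's construction (Koosis, \S II.A), which for a closed set of Lebesgue measure zero produces an $H^2$ function with badly behaved boundary values there; applied to the support of the singular measure $\mu$, this yields $\varepsilon \in H^2$ for which $(1-b)\varepsilon$ has no $L^2(\mu)$-boundary and whose Fourier partial sums diverge in $L^2(\mu)$. Any correct argument for Part~2 must produce an $\varepsilon$ lying genuinely outside $\bigcup_K (1-b)^{-1}(H^2 \ominus b^K H^2)$, which your explicit choice does not.
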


In the context of the stability of frames, the assumption on the noise is $\{ \epsilon_{n} \} \in \ell^2$.  As a consequence of Proposition \ref{P:divergence}, however, we must make alternative assumptions on the noise to obtain convergence, as well as stability, of the Kaczmarz reconstruction in Equation \eqref{eq:epsilon-sequence}.  To help determine when the series $E_{\epsilon}$ converges, we utilize known facts concerning the convergence of Fourier series in $L^2(\mu)$. The foundational result in this regard is: if $f \in H^2 \ominus bH^2$, then its Fourier series converges in $L^2(\mu)$ \cite{Pol93,HW17a}.  The following is a known extension of this result, though we include the proof in the Appendix.

\begin{Lemma} \label{L:FS-convergence}
If $f(z) \in H^2 \ominus b^K H^2$ for some $K \in \mathbb{N}$, then the Fourier series for $f$ converges in $L^2(\mu)$, i.e. if $f(z) = \sum a_{n} z^{n}$, then $\sum a_{n} e^{2 \pi i n x}$ converges in $L^2(\mu)$.
\end{Lemma}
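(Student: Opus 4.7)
The plan is to deduce the $L^2(\mu)$-convergence from a convergence statement in an auxiliary space $L^2(\nu)$, where $\nu$ is the (singular probability) measure whose corresponding inner function under Herglotz is $b^K$. Because $b^K$ is itself an inner function vanishing at $0$, such a $\nu$ exists and is singular. Applying the foundational result (i.e., the $K=1$ case, with $b^K$ and $\nu$ in place of $b$ and $\mu$) to $f\in H^2\ominus b^K H^2$, one obtains convergence of the Taylor--Fourier series $\sum f_n e^{2\pi inx}$ in $L^2(\nu)$.

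The link between $\mu$ and $\nu$ comes from the partial-fraction identity
\[
\frac{1+w^K}{1-w^K} \;=\; \frac{1}{K}\sum_{\zeta^K=1}\frac{\zeta+w}{\zeta-w}.
\]
Substituting $w=b(z)$ and taking real parts, the Poisson-integral representations on both sides give
\[
\nu \;=\; \frac{1}{K}\sum_{\zeta^K=1}\sigma_\zeta,
\]
where $\sigma_\zeta$ is the Clark measure of $b$ at $\zeta$, defined by requiring $\text{Re}((\zeta+b)/(\zeta-b))$ to be the Poisson integral of $\sigma_\zeta$. The Herglotz formula of the excerpt identifies $\sigma_1=\mu$, so $\nu\geq (1/K)\mu$; equivalently $\mu\leq K\nu$ as positive Borel measures.

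This domination immediately yields the norm comparison $\|p\|_{L^2(\mu)}^2\leq K\|p\|_{L^2(\nu)}^2$ for every trigonometric polynomial $p$. Apply this to $p=S_Nf-S_Mf$, where $S_Nf(x)=\sum_{n=0}^N f_n e^{2\pi inx}$ are the partial Fourier sums of $f$. Cauchy convergence in $L^2(\nu)$ (from the first paragraph) then passes to Cauchy convergence in $L^2(\mu)$, and hence the series converges in $L^2(\mu)$.

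The main obstacle is the identification $\mu\leq K\nu$, which rests on the partial-fraction decomposition together with the Aleksandrov--Clark identification of $\sigma_\zeta$ from the Herglotz representation; verifying that $\nu$ is a singular probability measure (it is, as a positive combination of Clark measures of the inner function $b^K$, and $\nu(\mathbb{T})=\text{Re}((1+b^K(0))/(1-b^K(0)))=1$) is routine. Once the domination of measures is in hand, the transference of convergence is automatic; in particular, one avoids any direct manipulation of the Taylor coefficients of $b^K$ or of $f$ itself.
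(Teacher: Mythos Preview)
Your proposal is correct and follows the same outline as the paper's proof: introduce the singular measure $\nu$ (the paper calls it $\mu^K$) corresponding to $b^K$ via Herglotz, apply the $K=1$ result there, and transfer convergence back to $L^2(\mu)$. Your explicit Clark-measure decomposition $\nu=\frac{1}{K}\sum_{\zeta^K=1}\sigma_\zeta$, giving the quantitative bound $\mu\leq K\nu$, is more detailed than the paper's one-line assertion that $\mu\ll\mu^K$ ``by the Herglotz Representation''; in fact it supplies precisely the bounded Radon--Nikodym derivative that the transference of $L^2$ convergence actually needs, whereas mere absolute continuity would not suffice.
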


We can now use this to establish our main result concerning convergence and approximation of the error using the standard Kaczmarz reconstruction.

\begin{Theorem} \label{T:Wold} 
Suppose $\{ \varphi_{n} \} \subset H$ is a stationary sequence with singular spectral measure $\mu$.  Suppose $\vec{x} \in H$, and suppose the noise $\{ \epsilon_{n} \}$ is such that $(1-b)\varepsilon \in  H^2\ominus b^K H^2$ for some $K \in \mathbb{N}$.  Then, the reconstruction of $\vec{x}$ from the noisy data $\{ \langle \vec{x}, \varphi_{n} \rangle + \epsilon_{n}\} $ satisfies the following:
\begin{enumerate}
    \item the series $\vec{x}_{\infty}$ and $E_{\epsilon}$ as in Equation \eqref{eq:noisy-approximation} both converge;
    \item \begin{equation} \label{Eq:bk-estimate}
    \| E_{\epsilon} \|_{H} \leq 2 K^{1/2} \| \epsilon_{n} \|_{\ell^2}.
\end{equation}
\end{enumerate}
\end{Theorem}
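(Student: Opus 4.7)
The overall strategy is to translate the whole problem to $L^2(\mu)$ via the unitary $U:L^2(\mu)\to H$ determined by $e^{2\pi inx}\mapsto\varphi_n$ (which is unitary because $\{\varphi_n\}$ is effective with singular spectral measure), to identify the error term with the Fourier sum in $L^2(\mu)$ of the function $(1-b)\varepsilon$, and then to exploit the Beurling-type orthogonal decomposition of $H^2\ominus b^KH^2$ together with Lemma \ref{L:FS-convergence}.

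First I would dispatch the two convergence claims. Applying Lemma \ref{L:arbitrary} with inputs $c_j=\langle\vec{x},\varphi_j\rangle$ shows that the partial sums of the series $\vec{x}_\infty$ are exactly the Kaczmarz iterates generated from the noiseless data, so effectiveness of $\{\varphi_n\}$ forces $\vec{x}_\infty = \vec{x}$ in $H$. For $E_\epsilon$, Equation \eqref{Eq:1-be} identifies the coefficient sequence $\{\sum_{j=0}^n\alpha_{n-j}\epsilon_j\}$ as precisely the Taylor coefficients of $(1-b)\varepsilon$; since by hypothesis $(1-b)\varepsilon\in H^2\ominus b^KH^2$, Lemma \ref{L:FS-convergence} gives convergence of the associated Fourier series in $L^2(\mu)$, and passing through $U$ yields $E_\epsilon$ convergent in $H$ with $\|E_\epsilon\|_H$ equal to the $L^2(\mu)$-norm of that limit.

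For the norm estimate I would use the orthogonal decomposition
\[
H^2\ominus b^KH^2 \;=\; \bigoplus_{k=0}^{K-1} b^k\bigl(H^2\ominus bH^2\bigr),
\]
which holds because multiplication by the inner function $b$ is an isometry of $H^2$. Writing $(1-b)\varepsilon=\sum_{k=0}^{K-1} b^k f_k$ with $f_k\in H^2\ominus bH^2$, orthogonality in $H^2$ gives $\sum_k\|f_k\|_{H^2}^2=\|(1-b)\varepsilon\|_{H^2}^2\leq 4\|\epsilon_n\|_{\ell^2}^2$, where the last inequality uses $\|1-b\|_{H^\infty}\leq 2$ (a consequence of $b$ being inner) and Parseval's identity. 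By Lemma \ref{L:FS-convergence}, the Fourier series of each $b^kf_k\in H^2\ominus b^{k+1}H^2$ converges in $L^2(\mu)$; call the limit $F_k$. Since partial Fourier sums split additively across the decomposition, the $L^2(\mu)$-limit of the Fourier series of $(1-b)\varepsilon$ is $\sum_{k=0}^{K-1} F_k$.

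The step I expect to be the main obstacle is the identification $\|F_k\|_{L^2(\mu)}=\|f_k\|_{H^2}$. The intuition, which I would make rigorous using Clark measure machinery, is that $b$ has boundary value $1$ $\mu$-a.e., so $b^kf_k$ and $f_k$ share the same $\mu$-boundary function; and for $f_k\in H^2\ominus bH^2$ that boundary function coincides $\mu$-a.e.\ with $V_\mu^{-1}(f_k)$, whose $L^2(\mu)$-norm is $\|f_k\|_{H^2}$ by unitarity of $V_\mu$. Granting this identification, the triangle inequality in $L^2(\mu)$ together with Cauchy--Schwarz complete the estimate:
\[
\|E_\epsilon\|_{H} \;=\; \Bigl\|\sum_{k=0}^{K-1}F_k\Bigr\|_{L^2(\mu)} \;\leq\; \sum_{k=0}^{K-1}\|f_k\|_{H^2} \;\leq\; K^{1/2}\,\|(1-b)\varepsilon\|_{H^2} \;\leq\; 2K^{1/2}\,\|\epsilon_n\|_{\ell^2},
\]
which is exactly \eqref{Eq:bk-estimate}. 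Everything outside the boundary-value identification is bookkeeping with the decomposition and standard inequalities.
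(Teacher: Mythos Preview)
Your proposal is correct and follows essentially the same route as the paper: both use the Wold decomposition $H^2\ominus b^KH^2=\bigoplus_{k=0}^{K-1}b^k(H^2\ominus bH^2)$, invoke Lemma~\ref{L:FS-convergence} for convergence, and obtain the estimate via the triangle inequality, Cauchy--Schwarz, and the bound $\|1-b\|_{H^\infty}\leq 2$. The identification you flag as the main obstacle---that the $L^2(\mu)$ boundary value of $b^kf_k$ has norm $\|f_k\|_{H^2}$---is exactly the norm equality the paper writes as $\|b^jf_j\|_{H^2}^2=\|f_j^\star\|_{L^2(\mu)}^2$, and your justification via $b^\star=1$ $\mu$-a.e.\ together with the unitarity of $V_\mu$ is precisely the mechanism underlying it.
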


\begin{proof} 
Since the spectral measure of $\{ \varphi_{n} \}$ is singular, it is an effective sequence, and thus $\vec{x}_{\infty} = \vec{x}$.

By identifying $\varphi_{n}$ with $z^{n}$ in Equation \eqref{eq:epsilon-sequence}, we identify  $E_{\epsilon}$ with $(1-b(z))\varepsilon(z)$ via Equation \eqref{Eq:1-be}.  Using the Wold decomposition of $H^2$ (\cite{HJW18a}), we write
\begin{equation} \label{Eq:Wold-decomp}
    (1 - b(z))\varepsilon(z) = \sum_{j=0}^{K-1} b^{j}(z)f_{j}(z)
\end{equation}
where $f_{j} \in H^2 \ominus bH^2$.  By Lemma \ref{L:FS-convergence}, we have that the Fourier series of $(1 - b(z))\varepsilon(z)$ converges in $L^{2}(\mu)$, so $E_{\epsilon}$ converges in $H$, and we obtain the norm estimate
\begin{align*}
\| E_{\epsilon} \|_{H} &= \| ((1 - b)\varepsilon)^{\star}\|_{\mu} \\
&\leq \sum_{j=0}^{K-1} \| f_{j}^{\star} \|_{L^2(\mu)} \\
&\leq K^{1/2} \left( \sum_{j=0}^{K-1} \| f_{j}^{\star} \|_{L^2(\mu)}^{2} \right)^{1/2}.
\end{align*}

We also have the norm equality
\begin{equation*}
\| (1 - b)\varepsilon \|_{H^2}^2 = \sum_{j=0}^{K-1} \| b^{j} f_{j} \|_{H^2}^2 = \sum_{j=0}^{K-1} \| f_{j}^{\star} \|_{L^2(\mu)}^{2}.
\end{equation*}
Combining these yields the inequality in \eqref{Eq:bk-estimate}.
%\begin{equation*}
%    \| E_{\epsilon} \|_{H} \leq 2 K^{1/2} \| \epsilon_{n} \|_{\ell^2}.
%\end{equation*}
\end{proof}

We can refine this result slightly by making extra assumptions on the $\{ \epsilon_{n} \}$.  For example, in the context of frames, we can quantify the error caused by the noise in terms of the frame bounds; but we can also quantify the error in terms of the cancellation in the frame expansion.  Indeed, for a frame $\{ h_{n} \} \in H$ with analysis operator $\Theta_{h}$, if $\{ \epsilon_{n} \}$ has the property that $\Theta_{h}^{*}(\epsilon_{n}) = 0$, then the reconstruction error is $0$.  This occurs exactly when $\{\epsilon_{n}\}$ is orthogonal to the range of $\Theta_{h}$.  Reconsidering the reconstruction using the auxiliary sequence $\{ g_{n} \}$, we obtain the following refinement.

\begin{Proposition}  \label{P:g_n-no-error}
If $\{ \epsilon_{n} \}$ is such that $\varepsilon(z) \in bH^2$, then $E_{\epsilon,g} = 0$.
\end{Proposition}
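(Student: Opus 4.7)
The plan is to realize $E_{\epsilon,g}$ as the image of an $\ell^2$ sequence under the synthesis operator of the Parseval frame $\{g_n\}$, and then to show that $\varepsilon \in bH^2$ forces this sequence into the kernel of that operator.

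First, by Equation \eqref{Eq:1-be} the coefficients $c_n := \sum_{j=0}^n \alpha_{n-j}\epsilon_j$ satisfy $\sum_{n=0}^\infty c_n z^n = (1-b(z))\varepsilon(z) \in H^2$, so $\{c_n\} \in \ell^2$; writing $T_g : \ell^2(\mathbb{N}_0) \to H$ for the synthesis operator $(c_n) \mapsto \sum c_n g_n$, the series in \eqref{Eq:g_n-error} converges and equals $T_g(c)$. The Parseval frame identity $\vec{x} = \sum \langle \vec{x}, g_n\rangle g_n$ yields $T_g T_g^* = I_H$, so $T_g^*$ is an isometry and $\ker T_g = (\operatorname{range} T_g^*)^\perp$. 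Identifying $\ell^2(\mathbb{N}_0)$ with $H^2$ via Taylor coefficients, the task becomes showing that $\operatorname{range} T_g^*$ corresponds exactly to the model subspace $H^2 \ominus bH^2$, so that its orthogonal complement inside $H^2$ is $bH^2$.

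To verify this identification, let $\vec{x} \in H$ correspond to $f \in L^2(\mu)$ under the unitary equivalence $\varphi_n \leftrightarrow e^{2\pi i n x}$, so that $\widehat{f}(j) = \langle \vec{x}, \varphi_j \rangle$. Using \eqref{eq:alphas},
\[
\sum_{n=0}^\infty \langle \vec{x}, g_n \rangle z^n = \sum_{n=0}^\infty \Bigl( \sum_{j=0}^n \alpha_{n-j}\widehat{f}(j) \Bigr) z^n = (1-b(z)) \sum_{n=0}^\infty \widehat{f}(n) z^n,
\]
and the standard computation in Subsection \ref{ssec:HS-BF} (combined with $\mu^+ = 1/(1-b)$) shows that the right-hand side is exactly $V_\mu f(z)$. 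Since $V_\mu$ is unitary onto $H^2 \ominus bH^2$, it follows that $\operatorname{range} T_g^*$ coincides with $H^2 \ominus bH^2$ under the Taylor-coefficient identification, whence $\ker T_g = bH^2$.

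It remains to check that $(1-b)\varepsilon \in bH^2$ whenever $\varepsilon \in bH^2$: writing $\varepsilon = bh$ with $h \in H^2$ gives $(1-b)\varepsilon = b(1-b)h \in bH^2$. Consequently $c \in \ker T_g$ and $E_{\epsilon,g} = T_g(c) = 0$. The main conceptual step is the identification of $\operatorname{range} T_g^*$ with $H^2 \ominus bH^2$, which repackages the normalized Cauchy transform picture already developed in Subsection \ref{ssec:HS-BF}; the remaining manipulations are elementary algebra in $H^2$.
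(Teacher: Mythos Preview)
Your proof is correct and follows essentially the same route as the paper's own argument: both identify the range of the analysis operator $\Theta_g$ (your $T_g^*$) with $H^2\ominus bH^2$, observe that $\varepsilon\in bH^2$ forces $(1-b)\varepsilon\in bH^2$, and conclude that the coefficient sequence lies in the kernel of the synthesis operator. The only difference is that the paper cites \cite{HW17a} for the identification $\Theta_g(H)=H^2\ominus bH^2$, whereas you derive it directly by recognizing the generating function $\sum_n\langle\vec{x},g_n\rangle z^n$ as $V_\mu f$ using the material already in Subsection~\ref{ssec:HS-BF}; this makes your version self-contained but does not change the underlying idea.
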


\begin{proof}
For the frame $\{ g_{n} \}$, the range of its analysis operator $\Theta_{g}(H) = H^2 \ominus bH^{2}$ (\cite{HW17a}).  Thus, the reconstruction using $g_{n}$ in Equation \eqref{Eq:g_n-error} is such that the coefficients satisfy $(1 - b(z)) \varepsilon (z) \in bH^2$, and thus sums to $0$.
\end{proof}

We cannot obtain such a clean statement for reconstruction from $\{ \varphi_{n} \}$, since the coefficients $\{ \langle \vec{x}, \varphi_{n} \rangle \}$ are not in a natural Hilbert space, and so there is not a natural analogue for the kernel of the ``synthesis'' operator.  However, for reconstruction using the standard Kaczmarz algorithm, we can use the boundary functions to obtain a similar result.  Indeed, if $E_{\epsilon}$ converges, and $\left[ (1 - b(z)) \varepsilon(z) \right]^{\star} = 0$, we immediately obtain $E_{\epsilon} = 0$.

\begin{Proposition}
Suppose $\{ \epsilon_{n} \}$ is such that $(1 - b)\varepsilon \in H^2 \ominus b^{K} H^2$ for some $K \in \mathbb{N}$. Then $E_{\epsilon} = 0$  if and only if and the Wold decomposition of $(1-b)\varepsilon$ as in Equation \eqref{Eq:Wold-decomp} satisfies $\sum_{j} f_{j} = 0$.
\end{Proposition}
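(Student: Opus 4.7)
The plan is to exploit the identification from the proof of Theorem \ref{T:Wold} between $E_\epsilon$ and a boundary function in $L^2(\mu)$, combined with the characterizing fact that $b$ has $\mu$-a.e.\ boundary value $1$.

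Convergence of $E_\epsilon$ is already supplied by Theorem \ref{T:Wold} under the hypothesis $(1-b)\varepsilon \in H^2 \ominus b^K H^2$. That proof identifies $E_\epsilon$ with $((1-b)\varepsilon)^\star$ in $L^2(\mu)$ via the unitary correspondence $\varphi_n \leftrightarrow e^{2\pi i n x}$, and in particular $\|E_\epsilon\|_H = \|((1-b)\varepsilon)^\star\|_{L^2(\mu)}$. Hence $E_\epsilon = 0$ if and only if $((1-b)\varepsilon)^\star = 0$ in $L^2(\mu)$.

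Next, I would substitute the Wold decomposition $(1-b)\varepsilon = \sum_{j=0}^{K-1} b^j f_j$ and compute its boundary function. The characterizing property of $b$ recalled in the introduction gives $\lim_{r \to 1^-} b(re^{2\pi i x}) = 1$ for $\mu$-a.e.\ $x$, while each $f_j \in H^2 \ominus bH^2$ admits $\mu$-a.e.\ nontangential boundary values $f_j^\star$ as a consequence of the unitarity of $V_\mu$. Since the Wold sum is finite, boundary values distribute across it and multiplicativity at the boundary applies pointwise:
\[
\bigl((1-b)\varepsilon\bigr)^\star(x) = \sum_{j=0}^{K-1} \bigl(b^\star(x)\bigr)^j f_j^\star(x) = \sum_{j=0}^{K-1} f_j^\star(x), \qquad \mu\text{-a.e. }x.
\]
Writing $g := \sum_{j=0}^{K-1} f_j$, which lies in $H^2 \ominus bH^2$ (a subspace), we obtain $((1-b)\varepsilon)^\star = g^\star$ in $L^2(\mu)$.

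The final step is to observe that the boundary-value map $f \mapsto f^\star$ from $H^2 \ominus bH^2$ into $L^2(\mu)$ is injective; indeed, it coincides with the unitary inverse of the normalized Cauchy transform $V_\mu$ invoked at the start of the section. Therefore $g^\star = 0$ in $L^2(\mu)$ if and only if $g = 0$ in $H^2$, i.e.\ if and only if $\sum_{j=0}^{K-1} f_j = 0$. Chaining the equivalences delivers the claim. The main subtlety is not computational but rests in the boundary-value assertions with respect to the singular measure $\mu$ --- existence of $f_j^\star$ for $f_j \in H^2 \ominus bH^2$, distributivity across the finite Wold sum, and isometry of the boundary map on the model space. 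These are all classical consequences of the Aleksandrov--Clark--Poltoratski machinery already cited by the paper, so I would appeal to them rather than reprove them.
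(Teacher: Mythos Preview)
Your proof is correct and follows essentially the same approach as the paper's: both reduce $E_\epsilon = 0$ to $[(1-b)\varepsilon]^\star = 0$ in $L^2(\mu)$, use $b^\star = 1$ $\mu$-a.e.\ to rewrite the boundary of the Wold decomposition as $\sum_j f_j^\star$, and then invoke injectivity of the boundary map on $H^2 \ominus bH^2$ to conclude $\sum_j f_j = 0$. You spell out the justifications (unitarity of $V_\mu$, distributivity over the finite sum) more carefully than the paper does, but the argument is the same.
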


\begin{proof}
This follows from the previous observation and the fact that the boundary
\begin{equation*}
    \left[ (1 - b) \varepsilon \right]^{\star} = \sum_{j} f_{j}^{\star} = 0
\end{equation*}
if and only if $\sum_{j} f_{j} = 0$ since $f_{j} \in H^2\ominus bH^2$.
\end{proof}

%We will show in Subsection \ref{ssec:R-AS} that under simple assumptions on $\{ \epsilon_{n} \}$, but using Abel summation instead, we obtain a reconstruction error of $0$.  

%We now begin by examining briefly what may be considered as the broadest restriction on the error term.

\subsection{Reconstruction using Abel Summation} \label{ssec:R-AS}

While the results of Theorem \ref{T:Wold} allowed us to bound the norm of the reconstruction error using standard summation methods, in analogy with the relaxed Kaczmarz algorithm, we now utilize the stabilizing power of Abel summation.

Therefore, instead of the summation in Equation \eqref{eq:noisy-approximation}, we consider the Abel sum:
\begin{equation} \label{eq:noisy-approximation-abel}
    \lim_{r \to 1^{-}} \sum_{n=0}^{\infty} r^{n} \left( \sum_{j=0}^{n} \alpha_{n-j} \left( \langle \vec{x}, \varphi_{j} \rangle + \epsilon_{j} \right) \right) \varphi_{n}
\end{equation}
provided the limit exists.  We will see that in several different scenarios on the noise $\{ \epsilon_{n} \}$, Abel summation in fact completely removes the noise from the reconstruction.

We first note that for $r \in (0,1)$, the convergence of the series in Equation \eqref{eq:noisy-approximation-abel} is guaranteed under minimal assumptions on $\{ \epsilon_{n} \}$ (though not the existence of the limit).

\begin{Lemma} \label{L:abel-sum}
Suppose $\{ \varphi_{n} \}_{n=0}^{\infty} \subset H$ is a stationary sequence with singular spectral measure $\mu$.  For $r \in (0,1)$, the series
\begin{equation}  \label{Eq:abel-sum}
\sum_{n=0}^{\infty} r^{n} \left( \sum_{j=0}^{n} \alpha_{n-j} \epsilon_{j} \right)  \varphi_{n}
\end{equation}
converges in $H$ provided $\limsup_{n \to \infty} \sqrt[n]{ | \epsilon_{n} | } \leq 1$.  Consequently, the series in Equation \eqref{eq:noisy-approximation-abel} converges.
\end{Lemma}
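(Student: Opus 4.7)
The key observation, already recorded in Equation \eqref{Eq:1-be}, is that
\[
C_n := \sum_{j=0}^{n} \alpha_{n-j}\,\epsilon_j
\]
is precisely the $n$-th Taylor coefficient of $F(z) := (1-b(z))\,\varepsilon(z)$. The plan is to argue that $F$ is analytic on $\mathbb{D}$, extract a geometric decay bound on $|C_n|$ inside the disc via Cauchy's coefficient estimate, and then use $\|\varphi_n\|=1$ to upgrade absolute convergence of the scalar series $\sum r^n |C_n|$ to norm convergence of the vector series in $H$.

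For the analyticity: since $b$ is inner, $|b(z)|\leq 1$ on $\mathbb{D}$, so $1-b$ is a bounded analytic function there; in particular, its Taylor series converges absolutely on $\mathbb{D}$. The hypothesis $\limsup_n \sqrt[n]{|\epsilon_n|}\leq 1$ says exactly, via Cauchy--Hadamard, that $\varepsilon(z)=\sum_n \epsilon_n z^n$ has radius of convergence at least $1$. The product of two power series absolutely convergent at $z$ is the Cauchy product, so $F$ is analytic on $\mathbb{D}$ with Taylor coefficients $\{C_n\}$ at $0$. Fix $r\in(0,1)$ and choose $r<r'<1$; since $|F|\leq M$ on $\{|z|\leq r'\}$, Cauchy's estimate yields $|C_n|\leq M/(r')^n$, whence
\[
\sum_{n=0}^{\infty} \left\| r^n C_n \varphi_n \right\|_H = \sum_{n=0}^{\infty} r^n |C_n| \leq M\sum_{n=0}^{\infty}(r/r')^n <\infty,
\]
so the series in Equation \eqref{Eq:abel-sum} converges absolutely in $H$.

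For the concluding assertion, the series in Equation \eqref{eq:noisy-approximation-abel} splits as the series just handled plus its noiseless counterpart, obtained by replacing $\epsilon_j$ with $\langle \vec{x},\varphi_j\rangle$. Since $|\langle \vec{x},\varphi_j\rangle|\leq \|\vec{x}\|$ by Cauchy--Schwarz (using $\|\varphi_j\|=1$), the sequence $\{\langle \vec{x},\varphi_j\rangle\}$ trivially satisfies the same $\limsup$ hypothesis, so the argument above applies verbatim to the noiseless piece, and the two convergent series may be added.

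I do not anticipate a substantive obstacle; the content is simply recognizing $\{C_n\}$ as Taylor coefficients of a disc-analytic function and then passing from scalar to vector summation via $\|\varphi_n\|=1$. The one easily-overlooked point worth making explicit is that $\{\alpha_n\}$ are the Taylor coefficients of $1-b\in H^{\infty}(\mathbb{D})$, so $1-b$ is absolutely convergent at each $z\in\mathbb{D}$; the crude bound $|\alpha_n|\leq \|1-b\|_{H^2}\leq 2$ suffices if one prefers not to invoke Cauchy--Hadamard for this factor.
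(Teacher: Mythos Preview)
Your proof is correct and follows essentially the same strategy as the paper: both recognize that $\sum_n C_n z^n = (1-b(z))\varepsilon(z)$ is analytic on $\mathbb{D}$ under the $\limsup$ hypothesis, and both deduce convergence of the $r$-weighted series from this analyticity.

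There is a small but worthwhile difference in execution. The paper passes through the spectral representation, replacing $\varphi_n$ by $e^{2\pi i n x}$ and invoking uniform convergence of the Taylor series on $\{|z|=r\}$ to conclude convergence in $L^2(\mu)$; for the noiseless term it then appeals to effectiveness of $\{\varphi_n\}$. You instead stay in the abstract Hilbert space, extract the Cauchy coefficient bound $|C_n|\le M(r')^{-n}$, and use $\|\varphi_n\|=1$ to get absolute convergence directly in $H$; for the noiseless term you rerun the same estimate using $|\langle \vec{x},\varphi_j\rangle|\le\|\vec{x}\|$. Your route is slightly more self-contained, since it never needs the unitary $H\to L^2(\mu)$ nor the effectiveness of the sequence, only that the $\varphi_n$ are unit vectors. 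The paper's route, on the other hand, sets up the $L^2(\mu)$ viewpoint that is reused throughout the subsequent results.
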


\begin{proof}
If we replace $\varphi_{n}$ with $e^{2 \pi i n x}$, the series in Equation \eqref{Eq:abel-sum} becomes:
\begin{equation} \label{Eq:sumonD}
\sum_{n=0}^\infty \bigg(\sum_{j=0}^n\alpha_{n-j}\epsilon_j\bigg)z^n=\left(1-b(z)\right)\epsilon(z)
\end{equation}
for $z\in\mathbb{D}$. Our assumption on $\{ \epsilon_{n} \}$ implies the right-hand side of Equation (\ref{Eq:sumonD}) is analytic on $\mathbb{D}$.  It now follows from uniform convergence that the series in Equation (\ref{Eq:abel-sum}) converges in $L^2(\mu)$.

Since $\{ \varphi_{n} \}$ is effective in $H$, we have
\begin{equation*}
    \lim_{r \to 1^{-}} \sum_{n=0}^{\infty} r^{n} \left( \sum_{j=0}^{n} \alpha_{n,j} \left( \langle \vec{x}, \varphi_{j} \rangle \right) \right) \varphi_{n} = \vec{x}.
\end{equation*}
\end{proof}

Recall that for a holomorphic function $f$ on the disc, we say that $f$ has a $L^2(\mu)$-boundary if there exists a $f^{\star} \in L^2(\mu)$ such that
\[ \lim_{r \to 1^{-}} f(r e^{2 \pi i x}) = f^{\star}(x) \]
with convergence in norm.  The following follows immediately from this definition.

\begin{Lemma} \label{L:obvious}
Suppose $\{ \varphi_{n} \}_{n=0}^{\infty} \subset H$ is a stationary sequence with singular spectral measure $\mu$.  The limit in Equation \eqref{eq:noisy-approximation-abel} exists if and only if $\left[(1-b(z))\varepsilon(z)\right]^{\star} \in L^2(\mu)$.  Moreover, the reconstruction error is
\begin{equation*}
    \lim_{r \to 1^{-}} \left\| \vec{x} - \sum_{n=0}^{\infty} r^{n} \left( \sum_{j=0}^{n} \alpha_{n-j} \left( \langle \vec{x}, \varphi_{j} \rangle + \epsilon_{j} \right) \right) \varphi_{n} \right\|_{H} = \left\| \left[(1-b(z))\varepsilon(z)\right]^{\star} \right\|_{L^2(\mu)}.
\end{equation*}
\end{Lemma}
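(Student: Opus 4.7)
The plan is to split the Abel sum into a signal piece and a noise piece and handle each separately. Write
\[ S(r) = \sum_{n=0}^\infty r^n \bigg(\sum_{j=0}^n \alpha_{n-j}\big(\langle \vec{x}, \varphi_j\rangle + \epsilon_j\big)\bigg)\varphi_n = T(r) + U(r), \]
where $T(r)$ is formed from the $\langle \vec{x},\varphi_j\rangle$ and $U(r)$ from the $\epsilon_j$. Existence of $\lim_{r\to 1^-}S(r)$ and the error formula will then both reduce to the analogous questions for $U(r)$ alone, once I verify that $T(r)\to \vec{x}$ in $H$.

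For the signal piece, effectiveness of $\{\varphi_n\}$ gives the partial sums $\vec{x}_N = \sum_{n=0}^N\big(\sum_{j=0}^n \alpha_{n-j}\langle \vec{x},\varphi_j\rangle\big)\varphi_n$ converging to $\vec{x}$ in $H$. I would then invoke a Hilbert-space version of the standard Abel theorem: if $s_N\to s$ in a Banach space, then $(1-r)\sum_N r^N s_N \to s$ as $r\to 1^-$. This is a routine transcription of the scalar proof, using $(1-r)\sum_N r^N = 1$ and splitting into a small tail plus finitely many terms killed by the $(1-r)$ factor. Combined with Abel summation by parts (expressing $\sum_n r^n v_n$ in terms of the partial sums $\vec{x}_N$), this yields $T(r)\to \vec{x}$ in $H$.

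For the noise piece, I would pass to $L^2(\mu)$ via the unitary equivalence $\varphi_n\leftrightarrow e^{2\pi inx}$, available because $\mu$ is singular. Identity \eqref{Eq:1-be} reads $\sum_n \big(\sum_{j\le n}\alpha_{n-j}\epsilon_j\big) w^n = (1-b(w))\varepsilon(w)$ on $\mathbb{D}$, and substituting $w = re^{2\pi ix}$ together with the uniform convergence already used in the proof of Lemma \ref{L:abel-sum} identifies $U(r)$ with the bounded function $x\mapsto (1-b(re^{2\pi ix}))\varepsilon(re^{2\pi ix})$ in $L^2(\mu)$. The very definition of an $L^2(\mu)$ boundary function then gives: $\lim_{r\to 1^-}U(r)$ exists in $H$ if and only if $(1-b)\varepsilon$ admits an $L^2(\mu)$ boundary $[(1-b)\varepsilon]^\star$, and in that case the limit corresponds to $[(1-b)\varepsilon]^\star$.

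To conclude, since $T(r)\to \vec{x}$, the limit $\lim S(r)$ exists iff $\lim U(r)$ exists, proving the biconditional. For the error, continuity of the norm yields
\[\lim_{r\to 1^-}\|\vec{x}-S(r)\|_H = \lim_{r\to 1^-}\|U(r) - (\vec{x}-T(r))\|_H = \big\|[(1-b)\varepsilon]^\star\big\|_{L^2(\mu)}. \]
No single step is genuinely hard; the main bookkeeping is the Hilbert-space Abel theorem (a routine transcription of the scalar proof) and the $L^2(\mu)$-level identification in the noise sum. That, together with the unitary equivalence to $L^2(\mu)$, is the whole argument.
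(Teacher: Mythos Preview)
Your proposal is correct and matches the paper's approach. The paper treats this lemma as immediate from the definition of $L^2(\mu)$-boundary, having already established in the preceding Lemma~\ref{L:abel-sum} both the identification of the noise piece with $(1-b(re^{2\pi ix}))\varepsilon(re^{2\pi ix})$ in $L^2(\mu)$ and the convergence $T(r)\to\vec{x}$ of the signal piece; your write-up simply unpacks these same ingredients more explicitly, including the routine Hilbert-space Abel theorem that the paper invokes without comment.
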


We would like to have assumptions on $\{ \epsilon_{n} \}$ that guarantee convergence of the Abel sum.  This is easiest when the Abel sum of the noise term converges to $0$.  We present several different assumptions that yield this convergence  in Equation \eqref{eq:noisy-approximation-abel}; in these cases the reconstruction error is $0$.

\begin{Theorem} \label{Th:epsilon-boundary}
Suppose $\{ \varphi_{n} \} \subset H$ is a stationary sequence with singular spectral measure $\mu$.  Suppose $\vec{x} \in H$, and suppose the noise $\{ \epsilon_{n} \}$ is such that $\varepsilon^{\star} \in L^2(\mu)$ exists.

Then the following conditions hold:
\begin{enumerate}
\item 
\begin{equation} \label{Eq:Abel-noise}
\lim_{r\to 1^-} \left\|\sum_{n=0}^\infty r^n \left( \sum_{j=0}^n \alpha_{n-j}\epsilon_{j} \right) \varphi_{n} \right\|_{H} = 0;
\end{equation}
\item consequently, the limit in Equation \eqref{eq:noisy-approximation-abel} is $\vec{x}$.
\end{enumerate}
\end{Theorem}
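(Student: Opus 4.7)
The plan is to translate everything to $L^2(\mu)$ via the unitary identification $\varphi_n \leftrightarrow e^{2\pi inx}$ (available because $\mu$ is singular) and then recognize the Kaczmarz partial sum as the radial dilate of the analytic function $(1-b(z))\varepsilon(z)$. By Equation \eqref{Eq:1-be}, for each $r \in (0,1)$,
\begin{equation*}
\sum_{n=0}^\infty r^n \Bigl(\sum_{j=0}^n \alpha_{n-j}\epsilon_j\Bigr) e^{2\pi inx} = (1-b(re^{2\pi ix}))\,\varepsilon(re^{2\pi ix}),
\end{equation*}
so by Lemma \ref{L:obvious} part (1) reduces to showing that $\|(1-b(r\cdot))\varepsilon(r\cdot)\|_{L^2(\mu)} \to 0$ as $r \to 1^-$.

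The main idea is to replace the moving factor $\varepsilon(r\cdot)$ by the fixed boundary $\varepsilon^\star$ and then exploit the Herglotz properties of $b$. I would split
\begin{equation*}
(1-b(r\cdot))\varepsilon(r\cdot) = (1-b(r\cdot))\,\varepsilon^\star \;+\; (1-b(r\cdot))\bigl(\varepsilon(r\cdot) - \varepsilon^\star\bigr)
\end{equation*}
and treat each piece separately. Since $b$ maps $\mathbb{D}$ into $\overline{\mathbb{D}}$, we have $|1-b(z)| \leq 2$ on $\mathbb{D}$, so the second piece has $L^2(\mu)$-norm bounded by $2\|\varepsilon(r\cdot) - \varepsilon^\star\|_{L^2(\mu)}$, which vanishes by the hypothesis that $\varepsilon^\star$ is the $L^2(\mu)$-boundary of $\varepsilon$. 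For the first piece I would invoke the two recorded properties of the inner function $b$ corresponding to a singular $\mu$: $|b(z)| \leq 1$ on $\mathbb{D}$ and $\lim_{r \to 1^-} b(re^{2\pi ix}) = 1$ for $\mu$-a.e.\ $x$. Then $|1-b(re^{2\pi ix})|^2 |\varepsilon^\star(x)|^2 \to 0$ $\mu$-almost everywhere and is dominated by $4|\varepsilon^\star(x)|^2 \in L^1(\mu)$, so the dominated convergence theorem in $L^2(\mu)$ gives $\|(1-b(r\cdot))\varepsilon^\star\|_{L^2(\mu)} \to 0$. This proves (1).

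For part (2), I would split the full Abel sum in Equation \eqref{eq:noisy-approximation-abel} into the noise-free piece $\sum_{n=0}^\infty r^n a_n$, with $a_n := \bigl(\sum_{j=0}^n \alpha_{n-j}\langle \vec{x},\varphi_j\rangle\bigr)\varphi_n$, plus the noise piece handled in (1). By Lemma \ref{L:arbitrary} the partial sums of the noise-free piece are precisely the Kaczmarz approximants $\vec{x}_N$, which converge to $\vec{x}$ in $H$ by effectiveness. The vector-valued Abel identity $\sum_{n=0}^\infty r^n a_n = (1-r)\sum_{n=0}^\infty r^n \vec{x}_n$ together with the elementary fact that Abel summation is regular on norm-convergent sequences then yields $\sum_{n=0}^\infty r^n a_n \to \vec{x}$ as $r\to 1^-$. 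Combined with the vanishing noise contribution this proves (2).

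The argument is essentially triangle inequality plus dominated convergence; the one point to guard against carefully is conflating the Lebesgue boundary behavior $|b^\star| = 1$ $m$-a.e.\ with the $\mu$-a.e.\ value $b^\star = 1$. The latter, which is what the dominated convergence step consumes, is precisely the fact that distinguishes the singular spectral measure case from the Lebesgue case and is the reason Abel summation washes the noise out entirely.
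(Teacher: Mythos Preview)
Your proof is correct and follows essentially the same route as the paper: pass to $L^2(\mu)$ via the spectral representation, identify the noise Abel sum with $(1-b(r\cdot))\varepsilon(r\cdot)$, and use that $1-b(r\cdot)\to 0$ boundedly on the support of $\mu$ together with $\varepsilon(r\cdot)\to\varepsilon^\star$ in $L^2(\mu)$. Your explicit splitting into $(1-b(r\cdot))\varepsilon^\star$ plus $(1-b(r\cdot))(\varepsilon(r\cdot)-\varepsilon^\star)$ is a cleaner justification of what the paper compresses into a single appeal to ``Lebesgue's Theorem'' with convergence in measure; your treatment of part~(2) via Abel regularity spells out what the paper records earlier in Lemma~\ref{L:abel-sum}.
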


\begin{proof}
As before, we express the sum in Equation \eqref{Eq:Abel-noise} in its spectral representation by replacing $\varphi_{n}$ with $e^{2 \pi i n x}$.  In so doing, we obtain
\begin{align*}
\lim_{r\to 1^-} \left\|\sum_{n=0}^\infty r^n \left( \sum_{j=0}^n \alpha_{n-j}\epsilon_{j} \right) \varphi_{n} \right\|^{2}_{H} &= \lim_{r \to 1^{-}} \int_{0}^{1} | 1 - b(r e^{2 \pi i x}) |^{2} | \varepsilon(r e^{2 \pi i x}) |^2 d \mu(x) \\
&= 0.
\end{align*}
The integral goes to 0 by Lebesgue's Theorem, since $|b(z)| < 1$, 
\begin{equation*}
\lim_{r \to 1^{-}} 1 - b(r e^{2 \pi i x}) = 0
\end{equation*}
in measure with respect to $\mu$, and $\varepsilon(r e^{2 \pi i x}) \to \varepsilon^{\star}$ in the $L^2(\mu)$-norm.
\end{proof}

\begin{Corollary} \label{C:epsilon}
Suppose $\varepsilon^\star\in L^p(\mu)$ for $2\leq p\leq \infty$. Then 
\begin{displaymath}
\lim_{r\to 1^-} \left\| \sum_{n=0}^\infty r^n \left( \sum_{j=0}^n \alpha_{n-j}\epsilon_{j} \right) \varphi_{n} \right\|_{H} =0.
\end{displaymath}
\end{Corollary}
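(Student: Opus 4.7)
The plan is to deduce this Corollary directly from Theorem \ref{Th:epsilon-boundary} by means of the continuous inclusion $L^p(\mu) \hookrightarrow L^2(\mu)$ for $p \in [2,\infty]$. The central observation is that $\mu$ is a Borel \emph{probability} measure, and hence in particular finite, so H\"older's inequality yields the elementary norm estimate
\[
\|f\|_{L^2(\mu)} \;\leq\; \mu([0,1))^{\,1/2 - 1/p}\, \|f\|_{L^p(\mu)} \;=\; \|f\|_{L^p(\mu)},
\]
with the obvious convention when $p = \infty$.

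First I would interpret the hypothesis ``$\varepsilon^\star \in L^p(\mu)$'' in the same boundary-value sense as in the discussion preceding Lemma \ref{L:obvious}, namely that $\varepsilon(re^{2\pi i x}) \to \varepsilon^\star$ in the $L^p(\mu)$-norm as $r \to 1^-$. Applying the embedding inequality to the difference $\varepsilon(re^{2\pi i x}) - \varepsilon^\star$ then upgrades this convergence to $L^2(\mu)$-norm convergence. Hence $\varepsilon^\star$ qualifies as an $L^2(\mu)$-boundary of $\varepsilon$ in the sense required by Theorem \ref{Th:epsilon-boundary}, and invoking that theorem immediately gives the vanishing of the Abel sum in $H$-norm claimed by the corollary.

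I do not expect a substantive obstacle here: the argument is a soft embedding step followed by a direct appeal to the previous theorem. The only point warranting attention is the endpoint $p = \infty$, where the inequality becomes $\|f\|_{L^2(\mu)} \leq \|f\|_{L^\infty(\mu)}$ (again because $\mu$ is a probability measure), so the same embedding argument applies verbatim. Should one instead wish to interpret the $p = \infty$ case in the weaker sense that $\varepsilon \in H^\infty(\mathbb{D})$ with only $\mu$-a.e.\ boundary values, the conclusion still follows by bypassing the reduction and repeating the dominated-convergence step from the proof of Theorem \ref{Th:epsilon-boundary}: the integrand
\[
|1 - b(re^{2\pi i x})|^2\, |\varepsilon(re^{2\pi i x})|^2
\]
is dominated by $4\|\varepsilon\|_\infty^2$ and tends to zero $\mu$-a.e.\ since $b(re^{2\pi i x}) \to 1$ $\mu$-a.e.
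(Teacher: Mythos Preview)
Your proposal is correct and matches the paper's approach: the paper gives no separate proof for this corollary, treating it as an immediate consequence of Theorem~\ref{Th:epsilon-boundary}, and your reduction via the embedding $L^p(\mu)\hookrightarrow L^2(\mu)$ (valid since $\mu$ is a probability measure) is exactly the intended one-line justification.
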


%\begin{proof} Suppose $\varepsilon^*\in L^p(\mu)$ for $2\leq p\leq \infty$. Recall that for any set $E$ with finite measure with respect to $\mu$, and $2\leq p\leq \infty$, we have that $L^p(E,\mu)\subseteq L^2(E,\mu)$. Then since $\mathbb{T}$ has total measure $1$ with respect to $\mu$, $L^p(\mu)\subseteq L^2(\mu)$. Therefore, $\varepsilon^*\in L^p(\mu)\implies\varepsilon^*\in L^2(\mu)$, so Theorem \ref{Th:epsilon-boundary} applies. \end{proof}

The assumption that $\varepsilon^{\star}$ exists is a difficult assumption to check.  We turn now to assumptions that are easier to verify. To do so, we define a maximal function
\begin{equation} \label{Eq:maximal-function}
M_{\varepsilon}(x):=\sup_{0\leq r<1}|\varepsilon(re^{2\pi ix})|.
\end{equation}
Note that for $\varepsilon(z)\in H^\infty(\mathbb{D})$, $M_{\varepsilon}$ is essentially bounded with respect to $\mu$.

\begin{Lemma}  \label{L:epsilon-bounded}
Suppose $\{ \varphi_{n} \} \subset H$ is a stationary sequence with singular spectral measure $\mu$.  Suppose $\vec{x} \in H$, and suppose the noise $\{ \epsilon_{n} \}$ is such that $M_{\varepsilon} \in L^{2}(\mu)$.  Then the following conditions hold:
\begin{enumerate}
\item $$ \lim_{r\to 1^-} \left\| \sum_{n=0}^\infty r^n \left( \sum_{j=0}^n \alpha_{n-j}\epsilon_{j} \right) \varphi_{n} \right\|_{H}=0;$$
\item consequently, the limit in Equation \eqref{eq:noisy-approximation-abel} is $\vec{x}$.
\end{enumerate}
\end{Lemma}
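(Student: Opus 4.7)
The plan is to mimic the argument from Theorem \ref{Th:epsilon-boundary} almost verbatim, replacing the hypothesis ``$\varepsilon^{\star}$ exists in $L^{2}(\mu)$'' with the hypothesis ``$M_{\varepsilon} \in L^{2}(\mu)$'' and invoking the dominated convergence theorem with $M_{\varepsilon}^{2}$ (times a constant) playing the role of the dominating function.

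First, I would pass to the spectral representation by replacing $\varphi_{n}$ with $e^{2\pi i n x}$, so that, exactly as in Equation \eqref{Eq:sumonD},
\begin{equation*}
\left\| \sum_{n=0}^\infty r^n \left( \sum_{j=0}^n \alpha_{n-j}\epsilon_{j} \right) \varphi_{n} \right\|_{H}^{2}
= \int_{0}^{1} \bigl| 1 - b(re^{2\pi i x}) \bigr|^{2} \bigl| \varepsilon(re^{2\pi i x}) \bigr|^{2} \, d\mu(x).
\end{equation*}
The integrand is pointwise bounded by $4 M_{\varepsilon}(x)^{2}$, which belongs to $L^{1}(\mu)$ by hypothesis, since $|b(z)|<1$ on $\mathbb{D}$ implies $|1-b|\le 2$, and $|\varepsilon(re^{2\pi i x})|\le M_{\varepsilon}(x)$ by definition of the maximal function.

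Second, I would observe that the integrand tends to $0$ $\mu$-almost everywhere as $r \to 1^{-}$. This uses the property recalled in Subsection \ref{ssec:HS-BF} that $\lim_{r \to 1^{-}} b(r e^{2\pi i x}) = 1$ $\mu$-almost everywhere, so $|1 - b(re^{2\pi i x})|^{2} \to 0$ pointwise $\mu$-a.e., while the factor $|\varepsilon(re^{2\pi i x})|^{2}$ stays finite $\mu$-a.e.\ (again because $M_{\varepsilon}$ is finite $\mu$-a.e.). By the dominated convergence theorem, the integral goes to $0$, which is exactly claim (1).

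For claim (2), I would invoke Lemma \ref{L:abel-sum} to ensure that the noiseless Abel sum
\begin{equation*}
\lim_{r \to 1^{-}} \sum_{n=0}^{\infty} r^{n} \left( \sum_{j=0}^{n} \alpha_{n-j} \langle \vec{x}, \varphi_{j} \rangle \right) \varphi_{n} = \vec{x}
\end{equation*}
holds in $H$ (this uses effectivity of $\{\varphi_{n}\}$ and the fact that $M_{\varepsilon} \in L^{2}(\mu)$ implies that $\varepsilon$ is analytic on $\mathbb{D}$, so in particular $\limsup\sqrt[n]{|\epsilon_{n}|}\le 1$). Adding the noise contribution, which by (1) tends to $0$ in norm, yields (2) by the triangle inequality.

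The only potentially delicate point is making sure that the pointwise radial limit $b(re^{2\pi i x}) \to 1$ $\mu$-a.e.\ actually delivers pointwise convergence of the full integrand $\mu$-a.e.; this is immediate since $\varepsilon(re^{2\pi i x})$ is bounded (in $r$) by $M_{\varepsilon}(x) < \infty$ for $\mu$-a.e.\ $x$. There is no genuine obstacle beyond bookkeeping — the content of the lemma is really just the observation that $M_{\varepsilon} \in L^{2}(\mu)$ supplies an $L^{2}(\mu)$-dominating function strong enough to substitute for the convergence of $\varepsilon(re^{2\pi i x})$ to a boundary value $\varepsilon^{\star}$.
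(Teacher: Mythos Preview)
Your proposal is correct and is essentially the same argument as the paper's: pass to the spectral representation, bound the integrand by (a constant times) $M_{\varepsilon}(x)^{2}$, and apply the dominated convergence theorem using that $b(re^{2\pi i x})\to 1$ $\mu$-a.e. The paper is slightly terser---it bounds only the $|\varepsilon|^{2}$ factor by $M_{\varepsilon}^{2}$ and then cites Lebesgue's Theorem---and for part~(2) the analyticity of $\varepsilon$ on $\mathbb{D}$ is a standing assumption in this section rather than a consequence of $M_{\varepsilon}\in L^{2}(\mu)$, but none of this changes the substance.
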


\begin{proof}
For $r \in (0,1)$, we have
\begin{align*}
\left\| \sum_{n=0}^\infty r^n \left( \sum_{j=0}^n \alpha_{n-j}\epsilon_{j} \right) e^{ 2 \pi in x} \right\|^{2}_{L^2(\mu)} &= \int_{0}^{1} | 1 - b(r e^{2 \pi i x} ) |^2 | \varepsilon(r e^{2 \pi i x}) |^2 d \mu(x) \\
&\leq \int_{0}^{1} | 1 - b(r e^{2 \pi i x} ) |^2 | M_{\varepsilon}(x) |^2 d \mu(x).
\end{align*}
Thus, the norm goes to 0 as $r \to 1^{-}$ by Lebesgue's Theorem.
\end{proof}

\begin{Theorem} \label{Th:epsilon-bounded}
Suppose $\{ \varphi_{n} \} \subset H$ is a stationary sequence with singular spectral measure $\mu$.  Suppose $\vec{x} \in H$.  If $\varepsilon(z)$ satisfies any of the following conditions, then the conclusions of Lemma  \ref{L:epsilon-bounded} hold:
\begin{enumerate}
    \item $\varepsilon(z) \in H^{\infty}$;
    \item $M_{\varepsilon} \in L^{\infty}(\mu)$;
    \item for some $q>1$,
\[ \sup_{0<r<1} \int_{0}^{1} | \varepsilon(r e^{2 \pi i x}) |^{2q} d\mu(\xi) < \infty. \]
\end{enumerate}
\end{Theorem}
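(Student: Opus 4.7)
The plan is to dispatch the three conditions separately, reducing (1) and (2) directly to the hypothesis of Lemma \ref{L:epsilon-bounded}, and handling (3) by a distinct Hölder argument inside the integral. For (1), since $\varepsilon \in H^\infty(\mathbb{D})$, we have the pointwise bound $|\varepsilon(z)| \leq \|\varepsilon\|_{H^\infty}$ for every $z \in \mathbb{D}$, so $M_\varepsilon(x) \leq \|\varepsilon\|_{H^\infty}$ on $[0,1]$; as $\mu$ is a finite measure this yields $M_\varepsilon \in L^\infty(\mu) \subset L^2(\mu)$. For (2), the same inclusion $L^\infty(\mu) \subset L^2(\mu)$ gives the conclusion immediately. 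In both cases Lemma \ref{L:epsilon-bounded} applies verbatim.

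For (3), one is tempted to try to deduce $M_\varepsilon \in L^2(\mu)$ from the uniform $L^{2q}(\mu)$ bound on the circles $\{re^{2\pi ix}\}$, but this is precisely the hard step: there is no standard maximal inequality relating a radial supremum to $L^p$ means against a singular measure, since the usual Hardy space theory relates nontangential maximal functions to $L^p(m)$, not $L^p(\mu)$. I would therefore bypass the maximal function altogether and repeat the integral computation of Lemma \ref{L:epsilon-bounded}, splitting the integrand by Hölder's inequality with exponents $q$ and $q'$ (where $\frac{1}{q}+\frac{1}{q'}=1$):
\[
\int_{0}^{1} \bigl|1 - b(re^{2\pi ix})\bigr|^{2} \bigl|\varepsilon(re^{2\pi ix})\bigr|^{2} \, d\mu(x)
\leq \left( \int_{0}^{1} \bigl|1 - b(re^{2\pi ix})\bigr|^{2q'} d\mu(x) \right)^{1/q'}
\left( \int_{0}^{1} \bigl|\varepsilon(re^{2\pi ix})\bigr|^{2q} d\mu(x) \right)^{1/q}.
\]
The second factor is uniformly bounded in $r \in (0,1)$ by hypothesis (3). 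For the first, recall that $|b(z)| < 1$ on $\mathbb{D}$, so $|1-b(re^{2\pi ix})|^{2q'} \leq 2^{2q'}$, and $b(re^{2\pi ix}) \to 1$ $\mu$-almost everywhere as $r \to 1^-$ by the boundary behavior of the inner function $b$ corresponding to $\mu$. Bounded convergence then forces the first factor to vanish, and hence the product does as well; combined with the effectivity conclusion of Lemma \ref{L:abel-sum}, this gives (2) and thereby (1).

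The main obstacle, as indicated, is condition (3): the instinct to mimic the earlier arguments by upgrading the $L^{2q}(\mu)$ bound to control of $M_\varepsilon$ runs into the singular nature of $\mu$. The key observation that unlocks the proof is that we do not need $M_\varepsilon \in L^2(\mu)$ as an intermediate step; the stabilizing factor $|1 - b(re^{2\pi ix})|$ already vanishes $\mu$-almost everywhere and is uniformly bounded, so pairing it with the uniformly $L^{2q}(\mu)$-bounded $\varepsilon(r\cdot)$ via Hölder is sufficient to make the full integral disappear in the limit.
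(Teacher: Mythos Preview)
Your proof is correct and follows essentially the same route as the paper: items (1) and (2) are reduced to the hypothesis $M_\varepsilon \in L^2(\mu)$ of Lemma~\ref{L:epsilon-bounded}, and item (3) is handled by applying H\"older's inequality with exponents $q$ and its conjugate to the integral $\int |1-b(re^{2\pi ix})|^2 |\varepsilon(re^{2\pi ix})|^2\, d\mu$, using that $|1-b|\le 2$ and $b(re^{2\pi ix})\to 1$ $\mu$-a.e.\ to kill the first factor via dominated convergence. Your commentary on why one should \emph{not} attempt to deduce $M_\varepsilon\in L^2(\mu)$ under (3) is a useful addition; the only minor infelicity is the phrase ``gives (2) and thereby (1)'' at the end, where the logical order is the reverse (the vanishing integral is precisely conclusion (1), and (2) then follows from (1) together with effectivity).
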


\begin{proof}
Items 1. and 2. follow immediately from Lemma \ref{L:epsilon-bounded}.  We prove Item 3. as follows.

As before, we have $1 - b(r e^{2 \pi i x}) \to 0$ boundedly in measure, so 
\begin{equation*}
\lim_{r \to 1^{-}} \int_{0}^{1} | 1 - b(r e^{2 \pi i x}) |^{2p} d \mu = 0
\end{equation*}
for any $1<p<\infty$.  Therefore by H\"{o}lder's Inequality,
\begin{align*}
\lim_{r \to 1^{-}} \bigg\|\sum_{n=0}^\infty r^{n} & \left( \sum_{j=0}^n \alpha_{n-j}\epsilon_{j} \right) e^{ 2 \pi i n x} \bigg\|_{L^2(\mu)}^2 =\lim_{r\to 1^-} \int_0^1 \left|\big(1-b(re^{2\pi ix})\big)\varepsilon(re^{2\pi ix}) \right|^2 d\mu(x)\\
&\leq \lim_{r\to 1^-} \left(\int_{0}^{1}  \left| \left(1-b(re^{2\pi ix}) \right) \right|^{2p} d \mu \right)^{1/p} \left( \int_{0}^{1} | \varepsilon(re^{2\pi ix})|^{2q} d\mu(x) \right)^{1/q} \\
&= 0.
\end{align*}
\end{proof}

The results stated in Theorems \ref{Th:epsilon-boundary} and \ref{Th:epsilon-bounded} are our most powerful applications of Abel summation, which state that not only is the series representation of the Kaczmarz algorithm as given in Equation \eqref{eq:noisy-approximation} Abel summable, but also Abel summation removes the corruption due to the noise $\{ \epsilon_{n} \}$.  In general, we would like weaker criteria on $\{ \epsilon_{n} \}$ for which the series is Abel summable, with the reconstruction error bounded.  This is more difficult for at least 2 reasons:  1) there are no known results that we are aware of that guarantee convergence of the Abel summation of Equation \eqref{eq:noisy-approximation}--except that the series converges or the tautology in Lemma \ref{L:obvious}; 2) as we've demonstrated earlier, there can not in general be a bound on the error term based on the $\ell^2$-norm of $\{ \epsilon_{n} \}$.  However, we can bound the error in two special cases.  The first is an immediate consequence of Theorem \ref{T:Wold}.

\begin{Proposition} \label{P:Wold-Abel} Suppose $(1-b)\varepsilon\in  H^2 \ominus b^{K} H^2$ for some $K \in \mathbb{N}$. Then for some constant $c_K$ dependent on $K$, we have that $\|\big((1-b)\varepsilon\big)^\star\|_{L^2(\mu)}\leq c_K\|(1-b)\varepsilon\|_{H^2}$. Therefore,
$$\lim_{r\to1^-}\Big\|\vec{x} - \sum_{n=0}^\infty r^n\Big(\sum_{j=0}^n\alpha_{n-j}\big( \langle \vec{x}, \varphi_{j} \rangle +\epsilon_{j} \big)\Big)\varphi_{n} \Big\|_{H} \leq c_K\big\|(1-b)\varepsilon\big\|_{H^2}.$$
\end{Proposition}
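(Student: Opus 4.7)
The proposition packages the boundary-norm estimate extracted from the proof of Theorem \ref{T:Wold} and combines it with Lemma \ref{L:obvious} to control the Abel-summed reconstruction error. The plan is first to verify that the boundary function $((1-b)\varepsilon)^\star$ exists in $L^2(\mu)$ and satisfies $\|((1-b)\varepsilon)^\star\|_{L^2(\mu)} \leq c_K \|(1-b)\varepsilon\|_{H^2}$, and then to invoke Lemma \ref{L:obvious} to translate this into the desired bound on the Abel limit.

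For the first step I would reproduce the Wold-decomposition argument from the proof of Theorem \ref{T:Wold}: write
\[ (1-b)\varepsilon = \sum_{j=0}^{K-1} b^j f_j, \qquad f_j \in H^2\ominus bH^2, \]
and invoke Lemma \ref{L:FS-convergence} to conclude that the Fourier series of $(1-b)\varepsilon$ converges in $L^2(\mu)$; in particular, the boundary function $((1-b)\varepsilon)^\star$ exists in $L^2(\mu)$, which is precisely the hypothesis needed for Lemma \ref{L:obvious}. Taking boundaries termwise, the triangle inequality followed by Cauchy--Schwarz yields
\[ \|((1-b)\varepsilon)^\star\|_{L^2(\mu)} \leq \sum_{j=0}^{K-1}\|f_j^\star\|_{L^2(\mu)} \leq K^{1/2}\bigg(\sum_{j=0}^{K-1}\|f_j^\star\|_{L^2(\mu)}^2\bigg)^{1/2}. \]
Since $b$ is inner, multiplication by $b^j$ is isometric on $H^2$ and the terms $b^j f_j$ are mutually orthogonal, giving $\|(1-b)\varepsilon\|_{H^2}^2=\sum_{j=0}^{K-1}\|f_j\|_{H^2}^2$; furthermore, for $f_j \in H^2\ominus bH^2$ the normalized Cauchy transform identifies $\|f_j\|_{H^2}=\|f_j^\star\|_{L^2(\mu)}$. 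Chaining these identities gives the first-step bound with $c_K=K^{1/2}$.

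For the second step, with $((1-b)\varepsilon)^\star$ now known to belong to $L^2(\mu)$, the reconstruction-error identity in Lemma \ref{L:obvious} applies directly:
\[ \lim_{r\to 1^{-}}\bigg\|\vec{x} - \sum_{n=0}^{\infty} r^n\bigg(\sum_{j=0}^{n}\alpha_{n-j}\big(\langle\vec{x},\varphi_j\rangle+\epsilon_j\big)\bigg)\varphi_n\bigg\|_{H} = \|((1-b)\varepsilon)^\star\|_{L^2(\mu)}. \]
Combining this equality with the inequality from the first step produces the stated bound.

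The argument is essentially bookkeeping; no new ideas beyond Theorem \ref{T:Wold} and Lemma \ref{L:obvious} are required, and the only step that demands care is the appeal to Lemma \ref{L:FS-convergence}, which is what guarantees that the boundary function exists in $L^2(\mu)$ so that Lemma \ref{L:obvious} is actually applicable. If one wanted an even cleaner presentation one could state and prove the boundary-norm estimate as a separate lemma, since it is already implicit in the proof of Theorem \ref{T:Wold}.
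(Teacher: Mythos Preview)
Your proposal is correct and matches the paper's approach: the paper simply declares the proposition ``an immediate consequence of Theorem \ref{T:Wold}'' without further proof, and you have filled in exactly the details that justification requires---the Wold-decomposition estimate from Theorem \ref{T:Wold} yielding $c_K=K^{1/2}$, together with Lemma \ref{L:obvious} to pass from the $L^2(\mu)$ boundary norm to the Abel-summed reconstruction error. Your explicit invocation of Lemma \ref{L:obvious} is a useful clarification, since Theorem \ref{T:Wold} itself concerns ordinary rather than Abel summation.
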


\begin{Proposition}
Suppose $\{ \epsilon_{n} \}$ is such that 
\begin{equation*}
    \sup_{0<r<1} \left( \int_{0}^{1} | 1 - b(r e^{2 \pi i x} ) |^2 | \varepsilon(r e^{2 \pi i x}) |^2 d \mu(x) \right)^{1/2} = B < \infty.
\end{equation*}
Then
\begin{equation} \label{Eq:weak-estimate}
    \limsup_{r \to 1^{-}} \left\| \vec{x} - \sum_{n=0}^\infty r^n\Big(\sum_{j=0}^n\alpha_{n-j}\big( \langle \vec{x}, \varphi_{j} \rangle +\epsilon_{j} \big)\Big)\varphi_{n} \right\|_{H} \leq B.
\end{equation}
In particular, if
\begin{equation*}
    \sup_{0<r<1} \left( \int_{0}^{1} | \varepsilon(r e^{2 \pi i x}) |^2 d \mu(x) \right)^{1/2} = C < \infty.
\end{equation*}
Then Inequality \eqref{Eq:weak-estimate} holds with $2C$ on the right hand side.
\end{Proposition}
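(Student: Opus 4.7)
The plan is to split the Abel-summed reconstruction into its signal and noise components, dispose of the signal part using effectiveness, and then read off the noise norm directly in the spectral representation, where the hypothesis is essentially already phrased.

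First, by linearity,
\begin{equation*}
\sum_{n=0}^\infty r^n\Bigl(\sum_{j=0}^n\alpha_{n-j}\bigl(\langle \vec{x},\varphi_j\rangle+\epsilon_j\bigr)\Bigr)\varphi_n \;=\; S_r(\vec{x}) + N_r(\varepsilon),
\end{equation*}
where $S_r(\vec{x})$ is the noiseless Abel sum and $N_r(\varepsilon) := \sum_{n=0}^\infty r^n\bigl(\sum_{j=0}^n\alpha_{n-j}\epsilon_j\bigr)\varphi_n$. Since $\{\varphi_n\}$ is effective, $\|\vec{x}-S_r(\vec{x})\|_H\to 0$ as $r\to 1^-$, exactly as in the proof of Lemma \ref{L:abel-sum}. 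The triangle inequality therefore reduces the proposition to showing $\limsup_{r\to 1^-}\|N_r(\varepsilon)\|_H\leq B$.

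To compute $\|N_r(\varepsilon)\|_H$ I would pass to the spectral representation $\varphi_n\leftrightarrow e^{2\pi i n x}$ in $L^2(\mu)$. With $c_n := \sum_{j=0}^n \alpha_{n-j}\epsilon_j$, the standing assumption that $\varepsilon$ has radius of convergence at least $1$ forces $\limsup|c_n|^{1/n}\leq 1$. For each fixed $r\in(0,1)$ the partial sums $\sum_{n\leq N} c_n r^n e^{2\pi i n x}$ then converge uniformly in $x$ to $\bigl(1-b(re^{2\pi i x})\bigr)\varepsilon(re^{2\pi i x})$, by the factorization already used in Equation \eqref{Eq:sumonD}. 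Uniform convergence on the circle implies $L^2(\mu)$ convergence, so $N_r(\varepsilon)$ converges in $H$ and
\begin{equation*}
\|N_r(\varepsilon)\|_H^2 \;=\; \int_0^1 |1-b(re^{2\pi i x})|^2\,|\varepsilon(re^{2\pi i x})|^2\,d\mu(x) \;\leq\; B^2
\end{equation*}
by hypothesis. Taking square roots and then $\limsup_{r\to 1^-}$ yields the first conclusion.

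For the \emph{in particular} statement I would simply use that $b$ is an inner function, so $|b(z)|\leq 1$ on $\mathbb{D}$ and hence $|1-b(re^{2\pi i x})|\leq 2$ pointwise. The assumed bound $\sup_r \|\varepsilon(re^{2\pi i\,\cdot})\|_{L^2(\mu)}=C$ then gives
\begin{equation*}
\int_0^1 |1-b(re^{2\pi i x})|^2|\varepsilon(re^{2\pi i x})|^2\,d\mu(x) \;\leq\; 4C^2,
\end{equation*}
so the first hypothesis holds with $B\leq 2C$, and the first part of the proposition applies with constant $2C$ on the right. No serious obstacle arises; the only mildly delicate point is verifying that the series defining $N_r(\varepsilon)$ converges in $H$ for each fixed $r<1$, which is automatic from the radius-of-convergence hypothesis on $\varepsilon$ together with the uniform-convergence argument already used in Lemma \ref{L:abel-sum}.
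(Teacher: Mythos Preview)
Your proposal is correct and follows essentially the same route as the paper: split via the triangle inequality into the noiseless reconstruction error (which vanishes by effectiveness) and the noise term $N_r(\varepsilon)$, then identify $\|N_r(\varepsilon)\|_H$ with the integral in the hypothesis via the spectral representation. You also spell out the ``in particular'' clause via $|1-b|\leq 2$, which the paper leaves implicit.
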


Note that we are not concluding here that the series is Abel summable.

\begin{proof}
We have the estimate
\begin{align}
\limsup_{r \to 1^{-}} & \left\| \vec{x} - \sum_{n=0}^\infty r^n \Big(\sum_{j=0}^n\alpha_{n-j}\big( \langle \vec{x}, \varphi_{j} \rangle +\epsilon_{j} \big)\Big)\varphi_{n} \right\|_{H} \notag \\
& \hspace{1cm} \leq  \lim_{r \to 1^{-}} \left\| \vec{x} - \sum_{n=0}^\infty r^n\Big(\sum_{j=0}^n\alpha_{n-j}\big( \langle \vec{x}, \varphi_{j} \rangle \Big)\varphi_{n} \right\|_{H} \label{Eq:middle} \\ 
& \hspace{2cm} + \limsup_{r \to 1^{-}} \left( \int_{0}^{1} | 1 - b(r e^{2 \pi i x} ) |^2 | \varepsilon(r e^{2 \pi i x}) |^2 d \mu(x) \right)^{1/2} \label{Eq:last}
\end{align}
with the \eqref{Eq:middle} converging to $0$ and \eqref{Eq:last} bounded by $B$.
\end{proof}

\subsection{Abel Summation and the Augmented Kaczmarz Algorithm} \label{ssec:AS-AKA}

The series and spectral representations of the Kaczmarz algorithm have provided powerful methods for determining the convergence and reconstruction errors in the presence of noise.  Indeed, as we have seen, Abel summation eliminates the corruption of the noise in many cases.  However, we would like to express the Abel summation method in the form of a Kaczmarz update, which we do so here in the form of an augmented iteration.  An augmented Kaczmarz algorithm was introduced in \cite{Aboud2019}, for the purpose of dualizing the Kaczmarz reconstruction. We similarly define the \emph{augmented Kaczmarz algorithm} for expressing the Abel summation in Equation (\ref{Eq:abel-sum}), Lemma \ref{L:abel-sum}, in terms of the Kaczmarz update.  Given $\{c_n\}_{n=0}^\infty$, define the update as 
	\begin{align} 
	\vec{x}_n=\vec{x}_{n-1}+c_n\varphi_n-\langle \vec{x}_{n-1},\varphi_n\rangle\varphi_n \notag \\
	\vec{y}_n=\vec{y}_{n-1}+r^n\langle \vec{x}_n-\vec{x}_{n-1},\varphi_n\rangle\varphi_n. \label{P:augmented-abel}
	\end{align}
	
	\begin{Theorem} \label{Th:Aug-Kacz}
		For a fixed $r \in (0,1)$, the $N^{th}$ step in the augmented Kaczmarz algorithm corresponds exactly with the partial Abel sum:
		\begin{displaymath}
		\vec{y}_N=\sum_{n=0}^N r^n\bigg(\sum_{j=0}^n\alpha_{n-j}c_n\bigg)\varphi_n.
		\end{displaymath}
%		the Abel summation reconstruction.
	\end{Theorem}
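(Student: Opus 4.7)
The proof plan is to reduce the augmented update for $\vec{y}_n$ to a telescoping sum whose increments are exactly the terms produced by Lemma \ref{L:arbitrary}. The key observation is that although the update for $\vec{y}_n$ is phrased in terms of the inner product $\langle \vec{x}_n - \vec{x}_{n-1}, \varphi_n \rangle$, Lemma \ref{L:arbitrary} already furnishes a closed form for $\vec{x}_N$, which makes the increment $\vec{x}_n - \vec{x}_{n-1}$ a single-term expression.

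Concretely, first I would invoke Lemma \ref{L:arbitrary} to write
\[
\vec{x}_n = \sum_{k=0}^{n} \left( \sum_{j=0}^{k} \alpha_{k-j} c_j \right) \varphi_k, \qquad \vec{x}_{n-1} = \sum_{k=0}^{n-1} \left( \sum_{j=0}^{k} \alpha_{k-j} c_j \right) \varphi_k,
\]
so that the two sums agree in every index except $k = n$, giving
\[
\vec{x}_n - \vec{x}_{n-1} = \left( \sum_{j=0}^{n} \alpha_{n-j} c_j \right) \varphi_n.
\]
Then, since $\{\varphi_n\}$ is stationary and unit-norm (this is implicit throughout the paper, and in particular $\langle \varphi_n, \varphi_n \rangle = 1$), I take the inner product with $\varphi_n$ to obtain
\[
\langle \vec{x}_n - \vec{x}_{n-1}, \varphi_n \rangle = \sum_{j=0}^{n} \alpha_{n-j} c_j.
\]

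Next I substitute this into the augmented update \eqref{P:augmented-abel}, yielding
\[
\vec{y}_n = \vec{y}_{n-1} + r^n \left( \sum_{j=0}^{n} \alpha_{n-j} c_j \right) \varphi_n.
\]
Finally, I run an easy induction on $N$ (with base case $\vec{y}_0 = r^0 \alpha_0 c_0 \varphi_0 = c_0 \varphi_0$, recalling that $\alpha_0 = 1$ because $b(0)=0$ and $1 - b(z) = \sum \alpha_n z^n$), or equivalently telescope the identity above from $n = 0$ to $n = N$, to arrive at
\[
\vec{y}_N = \sum_{n=0}^{N} r^n \left( \sum_{j=0}^{n} \alpha_{n-j} c_j \right) \varphi_n,
\]
which is the partial Abel sum in Equation \eqref{Eq:abel-sum}.

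I do not anticipate a genuine obstacle here: the content of the theorem is that the auxiliary variable $\vec{x}_n$ is designed precisely so that its increments pick out the Kaczmarz coefficient $\sum_{j=0}^n \alpha_{n-j} c_j$ one at a time, and the factor $r^n$ in the $\vec{y}_n$ update then installs the Abel weight without disturbing those coefficients. The only thing to be careful about is verifying that the coefficient extraction step is exact, i.e. that $\langle \vec{x}_n - \vec{x}_{n-1}, \varphi_n \rangle$ equals $\sum_{j=0}^n \alpha_{n-j} c_j$ and not some projected quantity; this is guaranteed by unit normalization of $\varphi_n$, which is standing throughout this section.
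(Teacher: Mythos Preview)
Your proposal is correct and follows essentially the same argument as the paper: both invoke Lemma~\ref{L:arbitrary} to identify $\vec{x}_n-\vec{x}_{n-1}=\bigl(\sum_{j=0}^n\alpha_{n-j}c_j\bigr)\varphi_n$, use the unit normalization of $\varphi_n$ to extract the scalar coefficient, and then finish by a straightforward induction. The only cosmetic difference is that the paper phrases the induction step starting from the hypothesis on $\vec{y}_{N-1}$, whereas you first simplify the update rule and then telescope; the logical content is identical.
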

	
	\begin{proof}
		Again we proceed by induction with a trivial first case. Thus we assume
		\begin{displaymath}
		\vec{y}_{N-1}=\sum_{n=0}^{N-1}r^n\bigg(\sum_{j=0}^n\alpha_{n-j}c_j\bigg)\varphi_n.
		\end{displaymath}
		Next note that by Lemma \ref{L:arbitrary}
		\begin{displaymath}
		\vec{x}_N-\vec{x}_{N-1}=\sum_{j=0}^N\alpha_{n-j}c_j\varphi_N=\langle \vec{x}_N-\vec{x}_{N-1},\varphi_N\rangle\varphi_N
		\end{displaymath}
		since $\varphi_N$ is assumed to be normalized. Hence we have
		\begin{displaymath}
		\begin{split}
		\vec{y}_N&=\vec{y}_{N-1}+r^N\sum_{j=0}^N\alpha_{n-j}c_j\varphi_N=\sum_{n=0}^{N-1}r^n\bigg(\sum_{j=0}^n\alpha_{n-j}c_j\bigg)\varphi_n+r^N\sum_{j=0}^N\alpha_{n-j}c_j\varphi_N\\
		&=\sum_{n=0}^{N}r^n\bigg(\sum_{j=0}^n\alpha_{n-j}c_j\bigg)\varphi_n.
		\end{split}
		\end{displaymath}
	\end{proof}

\begin{Corollary}
Let $r \in (0,1)$ be fixed, and suppose the hypotheses of Lemma \ref{L:abel-sum} are met.  Then the sequence $\{\vec{y}_{n}\}$ as defined in Equation (\ref{P:augmented-abel}) converges for $c_{n} = \langle f, \varphi_{n} \rangle + \epsilon_{n}$.
\end{Corollary}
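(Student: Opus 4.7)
The plan is to recognize this corollary as an immediate consequence of Theorem~\ref{Th:Aug-Kacz} combined with Lemma~\ref{L:abel-sum}. Theorem~\ref{Th:Aug-Kacz} supplies a closed-form partial-sum expression for the augmented iteration, while Lemma~\ref{L:abel-sum} already certifies convergence of precisely that partial sum in $H$ under the growth condition on $\{\epsilon_n\}$ that the hypotheses grant us.

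Concretely, I would first apply Theorem~\ref{Th:Aug-Kacz} to the input sequence $c_n = \langle f, \varphi_n \rangle + \epsilon_n$. This identifies the $N$-th augmented iterate as
$$
\vec{y}_N = \sum_{n=0}^{N} r^n \left( \sum_{j=0}^{n} \alpha_{n-j} \bigl( \langle f, \varphi_j \rangle + \epsilon_j \bigr) \right) \varphi_n,
$$
which is exactly the $N$-th partial sum of the series in Equation~\eqref{eq:noisy-approximation-abel}. I would then invoke Lemma~\ref{L:abel-sum}: its hypothesis $\limsup_n \sqrt[n]{|\epsilon_n|} \leq 1$ is precisely what we are assuming, and its conclusion is that the series in Equation~\eqref{eq:noisy-approximation-abel} converges in $H$. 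Consequently $\vec{y}_N$ converges in $H$ as $N \to \infty$.

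There is no substantive obstacle; the corollary is essentially a bookkeeping statement that slots together two results already in place. The only conceptual point worth mentioning, and one that is already embedded in Lemma~\ref{L:abel-sum}, is the natural split into a signal partial sum and a noise partial sum: the signal part is handled via effectivity of $\{\varphi_n\}$ (convergence of $\sum r^n a_n$ for a convergent $H$-valued series $\sum a_n$ follows by Abel's partial-summation identity applied to the partial sums $\vec{x}_N \to f$), and the noise part follows from the analyticity of $(1-b(z))\varepsilon(z)$ on $\mathbb{D}$ together with uniform convergence of the associated power series on $\{|z| = r\}$ for $r < 1$, which is exactly how Lemma~\ref{L:abel-sum} is proved.
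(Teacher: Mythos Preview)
Your proposal is correct and matches the paper's own proof, which simply reads ``This follows from Lemma~\ref{L:abel-sum} and Theorem~\ref{Th:Aug-Kacz}.'' Your additional commentary about the signal/noise split is accurate elaboration of how Lemma~\ref{L:abel-sum} is established, but is not needed for the corollary itself.
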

\begin{proof}
This follows from Lemma \ref{L:abel-sum} and Theorem \ref{Th:Aug-Kacz}.
\end{proof}

\subsection{The Relaxed Kaczmarz Algorithm and the Augmented Kaczmarz Algorithm} \label{ssec:RK-LSS}

%\textbf{RELAXED AHHHHHHH SPA DAY KACZMARZ AND WHAT NOT}

We recall that the relaxed Kaczmarz algorithm utilizes a relaxation parameter $\omega$, typically with $\omega \in (0,2)$.  For a periodic sequence (of unit vectors) $\{\psi_{n}\}_{n=1}^{\infty}$ with period $N$, we can construct a sequence of approximations $\vec{y}_{m}$ from the data $\{ \langle \vec{x}, \psi_{n} \rangle \}_{n=1}^{N}$ as follows:
\begin{align}
    \vec{x}_{n} &= \vec{x}_{n-1} + \omega \langle \vec{x} - \vec{x}_{n-1} , \psi_{n} \rangle \psi_{n} \label{Eq:relaxed-update} \\
    \vec{y}_{m} &= \vec{x}_{mN} \notag
\end{align}
As in Lemma \ref{L:arbitrary}, we can replace $\langle \vec{x}, \psi_{k} \rangle$ with a (periodic) sequence $\{ c_{n} \}$ in the update \eqref{Eq:relaxed-update}.  It can be shown that $\{\vec{y}_{m}\}$ is a convergent sequence \cite{T-71,N-86}; this limit depends on $\omega$.  If we denote
\begin{equation*}
    \lim_{m \to \infty} \vec{y}_{m} = \vec{y}(\omega),
\end{equation*}
then we consider
\begin{equation*}
    \lim_{\omega \to 0} \vec{y}(\omega) = \vec{z}_{0}.
\end{equation*}
It can also be shown that this limit exists \cite{T-71,N-86}, and moreover, is the least-squares solution, i.e. $\vec{z}_{0}$ minimizes the quantity
\begin{equation*}
    \sum_{n=1}^{N} | \langle \vec{z}, \psi_{n} \rangle - c_{n}|^{2}.
\end{equation*}
We can think of the parameter $r$ in Abel summation as a relaxation parameter.  Then the limiting processes $\omega \to 0$ and $r \to 1$ are analogous; however, these are not equivalent.

\begin{Proposition} \label{P:augmented-relaxed}
The \emph{augmented Kaczmarz algorithm} is equal to the \emph{relaxed Kaczmarz algorithm} if and only if $\{\omega_n\}=\{r^n\}$ for fixed $r\in(0,1)$, and $\{\varphi_n\}$ is an orthogonal set.
\end{Proposition}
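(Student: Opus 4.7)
My plan is to prove both implications by unwinding the updates one step at a time, using the series representation of $\vec{y}_n$ from Theorem \ref{Th:Aug-Kacz} and of $\vec{x}_n$ from Lemma \ref{L:arbitrary}. Throughout, I write the relaxed iterate as $\vec{z}_n = \vec{z}_{n-1} + \omega_n(c_n - \langle \vec{z}_{n-1},\varphi_n\rangle)\varphi_n$ with $\vec{z}_{-1} = 0$, and likewise take $\vec{y}_{-1} = \vec{x}_{-1} = 0$.

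For the ``if'' direction, I assume $\omega_n = r^n$ and $\{\varphi_n\}$ orthonormal. Then $\vec{x}_{n-1}, \vec{y}_{n-1}, \vec{z}_{n-1}$ all lie in the linear span of $\varphi_0,\ldots,\varphi_{n-1}$, so the inner products $\langle \vec{x}_{n-1},\varphi_n\rangle$ and $\langle \vec{y}_{n-1},\varphi_n\rangle = \langle \vec{z}_{n-1},\varphi_n\rangle$ all vanish. Both the augmented and the relaxed updates collapse to $\vec{y}_n - \vec{y}_{n-1} = r^n c_n\varphi_n = \vec{z}_n - \vec{z}_{n-1}$, and an induction yields $\vec{y}_n = \vec{z}_n$ for all $n \geq 0$.

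For the ``only if'' direction, suppose the two iterates agree for every sequence $\{c_n\}$, and proceed inductively. Using $\vec{x}_n - \vec{x}_{n-1} = (c_n - \langle \vec{x}_{n-1},\varphi_n\rangle)\varphi_n$, the step-$n$ equality $\vec{y}_n = \vec{z}_n$ reduces to
\begin{equation*}
r^n\bigl(c_n - \langle \vec{x}_{n-1},\varphi_n\rangle\bigr) \;=\; \omega_n\bigl(c_n - \langle \vec{y}_{n-1},\varphi_n\rangle\bigr),
\end{equation*}
an identity that must hold as a polynomial in $c_0,\ldots,c_n$. Comparing coefficients of $c_n$ forces $\omega_n = r^n$, and the constant term then yields $\langle \vec{x}_{n-1},\varphi_n\rangle = \langle \vec{y}_{n-1},\varphi_n\rangle$ for every admissible $c_0,\ldots,c_{n-1}$. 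Inserting the series formulas and writing $\gamma_k := \langle \varphi_0,\varphi_k\rangle$, this becomes
\begin{equation*}
\sum_{m=0}^{n-1}(1-r^m)\Bigl(\sum_{j=0}^m \alpha_{m-j}c_j\Bigr)\gamma_{n-m} = 0.
\end{equation*}

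I now probe with $c_j = \delta_{j,k}$ and reindex by $\ell = m-k$, $p = n-k$ to obtain $A_p = r^k B_p$, where $A_p := \sum_{\ell=0}^{p-1}\alpha_\ell\gamma_{p-\ell}$ and $B_p := \sum_{\ell=0}^{p-1}r^\ell\alpha_\ell\gamma_{p-\ell}$. Holding $p \geq 1$ fixed while letting $k$ range over $\mathbb{N}_0$ (taking $n = p+k$), the distinctness of the powers $r^k$ forces $A_p = 0$ and $B_p = 0$ for every $p \geq 1$. Combining $A_p = 0$ with the convolution identity $\sum_{\ell=0}^p \alpha_\ell\gamma_{p-\ell} = 0$ for $p \geq 1$, which is the $z^p$-coefficient of $(1-b(z))\mu^+(z) = 1$, gives $\alpha_p\gamma_0 = 0$; since $\gamma_0 = \|\varphi_0\|^2 = 1$, we conclude $\alpha_p = 0$ for all $p\geq 1$. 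Hence $b\equiv 0$, $\mu^+(z) \equiv 1$, and $\{\varphi_n\}$ is orthonormal. The core difficulty is engineering this probing step: a single choice of $\{c_j\}$ at a single $n$ yields only one linear combination of $A_p$ and $B_p$, so it is essential to let $n$ grow while holding $p$ fixed in order to decouple the two scalar vanishing conditions and ultimately force the Herglotz inner function $b$ to be identically zero.
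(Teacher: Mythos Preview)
Your proof is correct, and it takes a genuinely different route from the paper's in the ``only if'' direction.

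The paper argues by unwinding the relaxed recursion two (and then more) steps back: writing the $\varphi_N$-component of the relaxed iterate in terms of $\vec{x}_{N-2}$ exposes a cross-term carrying the factor $\omega_N\omega_{N-1}=r^{2N-1}$, a power of $r$ that is absent from the augmented $\varphi_N$-coefficient $r^N\bigl(\sum_j\alpha_{N-j}c_j\bigr)$. Matching forces that cross-term to vanish, hence $\langle\varphi_{N-1},\varphi_N\rangle=0$; iterating the unwind gives all the pairwise orthogonalities directly.

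You instead equate the single-step increments and reduce to $\langle\vec{x}_{n-1}-\vec{y}_{n-1},\varphi_n\rangle=0$, insert the closed-form series from Lemma~\ref{L:arbitrary} and Theorem~\ref{Th:Aug-Kacz}, and probe with $c_j=\delta_{j,k}$. Letting $k$ run with $p=n-k$ held fixed yields $A_p=r^kB_p$ for all $k\ge0$, which (because $r\in(0,1)$ gives distinct powers) forces $A_p=B_p=0$; combining $A_p=0$ with the $z^p$-coefficient of $(1-b)\mu^+=1$ kills every $\alpha_p$, so $b\equiv0$ and orthogonality follows via $\mu^+\equiv1$. This is a more spectral argument: it routes through the Herglotz structure $(\alpha_n,\widehat{\mu}(n),b,\mu^+)$ rather than through recursion cross-terms. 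Two incidental advantages of your version: you give the ``if'' direction explicitly (the paper only treats the converse), and you keep $r$ as a fixed scalar throughout, comparing coefficients in the $c_j$'s and then varying $(n,k)$---the paper's argument implicitly compares coefficients of distinct powers of $r$, which is a step that needs a bit more justification when $r$ is fixed.
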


\begin{proof}
Observe that in the augmented Kaczmarz algorithm we may represent
\begin{equation} \label{eq:aug-1}
\vec{y}_N=\sum_{n=0}^N r^n \left(\sum_{j=0}^{n}\alpha_{n-j}c_j\right)\varphi_n
\end{equation}
where we note that each $\varphi_N$ has the augmented coefficient $r^N$ and no other powers of $r$. That is, $\vec{y}_N=k_N(r^N\varphi_N)+\text{lower order terms}$.

We also see that in the relaxed Kaczmarz algorithm
\begin{equation} \label{eq:aug-2}
\begin{split}
\vec{x}_N=\vec{x}_{N-1}+\omega_N\left(c_N-\langle \vec{x}_{N-1},\varphi_N\rangle \right)\varphi_N
\end{split}
\end{equation}
such that each $\varphi_N$ has the relaxed coefficient $\omega_N$. That is, $\vec{x}_N=\kappa_N(\omega^N\varphi_N)+\text{lower order terms}$. As such, if equations \eqref{eq:aug-1} and \eqref{eq:aug-2} were equivalent, we would have that $\omega_j=r^j$ for all $j$.

Rewriting equation \eqref{eq:aug-2} recursively dependent on the $(N-2)^{th}$ term, however, we receive
\begin{equation} \label{eq:aug-3}
\begin{split}
\vec{x}_N&=\vec{x}_{N-2}+\omega_{N-1}\left(c_{N-1}-\langle  \vec{x}_{N-2},\varphi_{N-1}\rangle \right)\varphi_{N-1}\\
& \hspace{1cm} +\omega_N\left(c_N-\big\langle \vec{x}_{N-2}+\omega_{N-1}\left(c_{N-1}-\langle \vec{x}_{N-2},\varphi_{N-1}\rangle \right)\varphi_{N-1},\varphi_N\big\rangle \right)\varphi_N\\
&=\vec{x}_{N-2}+\omega_{N-1}\left(c_{N-1}-\langle \vec{x}_{N-2},\varphi_{N-1}\rangle \right)\varphi_{N-1}\\
& \hspace{1cm} +\omega_N\left(c_N-\langle \vec{x}_{N-2},\varphi_N\rangle\right)\varphi_N-\omega_N\omega_{N-1}\big\langle\left(c_{N-1}-\langle \vec{x}_{N-2},\varphi_{N-1}\rangle \right)\varphi_{N-1},\varphi_N\big\rangle \varphi_N\\
&=K_1\omega_N\varphi_N-K_2\omega_N\omega_{N-1}\varphi_N=(K_1 r^N-K_2r^{2N-1})\varphi_N
\end{split}
\end{equation}
for $K_1,K_2$ constants dependent on $\{c_n\},\{\varphi_n\}$. Clearly, for equation \eqref{eq:aug-3} to equal equation \eqref{eq:aug-2}, then $K_2=0$, which would imply that
\begin{displaymath}
\big\langle\left(c_{N-1}-\langle \vec{x}_{N-2},\varphi_{N-1}\rangle \right)\varphi_{N-1},\varphi_N\big\rangle=0\iff \langle \varphi_{N-1},\varphi_N\rangle =0
\end{displaymath}
since $\{c_n\}$ is arbitrary.

We could repeat this procedure to write $\vec{x}_N$ dependent on $\vec{x}_{N-3}$ in which case we would have that $\langle \varphi_{N-2},\varphi_N\rangle=0$, and so on. We thus recover that $\langle \varphi_{N-j},\varphi_N\rangle =0$ for all $j=1,...,N$.

Hence for the choice $\omega_j=r^j$, the \emph{augmented Kaczmarz algorithm} is equal to the \emph{relaxed Kaczmarz algorithm} if and only if $\{\varphi_n\}$ is an orthogonal set.
\end{proof}
	
\begin{Remark}
Recall that in the typical relaxed Kaczmarz algorithm, $\omega$ is fixed. Thus it would be impossible for $\{\omega_n\}=\{r^n\}$ for $r\in (0,1)$, implying that the relaxed Kaczmarz algorithm with any fixed parameter $\omega$ is \textit{never} equal to the augmented Kaczmarz algorithm.  In practice, $\omega$ is allowed to vary with the iteration $n$, though establishing convergence results in this case has been difficult to do.
\end{Remark}

\section{Stability via Truncated Sequences}  \label{sec:S-FS}

We consider here the situation that we make weak or no assumptions on the noise $\{\epsilon_{n}\}$.  Our results here also apply when the sequence $\{ \varphi_{n} \}$ is not stationary (though we still assume effective).  Our basic idea is to truncate the sequence to $\{\varphi_{n}\}_{n=0}^{N}$, then periodize and reconstruct using the relaxed Kaczmarz algorithm as in Equation (\ref{Eq:relaxed-update}). The relaxed Kaczmarz algorithm provides stability in the presence of noise when the sequence $\{ \varphi_{n} \}$ is periodic.

For $N \in \mathbb{N}$, define the periodic sequence $\{ \varphi_{n}^{(N)} \}$ given by:
\begin{equation} \label{Eq:periodized-phi}
\varphi_{n}^{(N)} = \varphi_{n} \text{ for $n=1, \dots, N$ and } \varphi_{n}^{(N)} = \varphi_{k}^{(N)} \text{ when $n \equiv k \mod N$. } 
\end{equation}

Given the data $\{ \langle \vec{x}, \varphi_{n} \rangle \}_{n=1}^{N}$, we apply the standard periodic Kaczmarz algorithm as in Equation (\ref{eq:update}) to obtain
\begin{equation*}
    \lim_{n \to \infty} \vec{x}_{n} = \mathbb{P}_{N} \vec{x},
\end{equation*}
where $\mathbb{P}_{N}$ is the projection onto the span of $\{ \varphi_{1}, \dots, \varphi_{N}\}$.  This is a consequence of the fact that periodic sequences are effective for their span.  The simple but crucial observation here is the following: $\lim_{N \to \infty} \mathbb{P}_{N} \vec{x} = \vec{x}$ since $\{ \varphi_{n} \}_{n=0}^{\infty}$ has dense span in $H$.  What we will see here is a trade-off between noise removal and accuracy of the reconstruction were there no noise.

\subsection{Truncation of Effective Sequences in the Presence of Noise} \label{ssec:TSS}

%Suppose that $\{ \varphi_{n} \}_{n=1}^{\infty} \subset H$ is an effective sequence. Now suppose our original sequence of moments are corrupted by noise, so the data we have is $\{ \langle \vec{x} , \varphi_{n} \rangle + \epsilon_{n} \}_{n=1}^{\infty}$.
We consider the situtation of Problem \ref{P:problem-statement}, though we do not require the sequence $\{ \varphi_{n} \}$ to be stationary. From now on, we let $\{\tilde{x}_n\}_{n\in\mathbb{N}}$ be the sequence of terms in the reconstruction of $x$ using the corrupted terms $\{\langle x,\varphi_n^{(N)}\rangle +\epsilon_n\}_{n\in\mathbb{N}}$.  Explicitly, this means
	\[\tilde{x}_1=\langle \vec{x},\varphi_1^{(N)}\rangle+\epsilon_1\varphi_1^{(N)},\hspace{1cm}
	\tilde{x}_{n+1}=\tilde{x}_n+(\langle \vec{x},\varphi_{n+1}^{(N)}\rangle +\epsilon_{n+1}-\langle \tilde{x}_n,\varphi_{n+1}^{(N)}\rangle)\varphi_{n+1}^{(N)}.\]
	We offer a simple estimate for the magnitude of the error in this reconstruction.

\begin{Proposition}  
Let $P_n$ denote the orthogonal projection onto the span of $\varphi_n$ as above, and let $T=\prod_{j=0}^{N-1} (I-P_{N-j})$.  Then for any $\vec{x}\in\mathcal{H}$,
\begin{displaymath}
\limsup_{m\to\infty}\|\mathbb{P}_N\vec{x}-\tilde{x}_{mN}\|\le \frac{1}{1-\|T\|_N}\left(|\epsilon_N|+\sum_{j=1}^{N-1}\|(I-P_N)...(I-P_{j+1}) \varphi_{j}^{(N)} \| |\epsilon_j|\right)
\end{displaymath}
where $\|\cdot\|_N$ denotes the operator norm on $\mathcal{B}(\mathcal{H}_N)$.
\end{Proposition}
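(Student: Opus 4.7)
My plan is to analyze the error vector $e_n := \mathbb{P}_N\vec{x} - \tilde{x}_n$ on the finite-dimensional space $\mathcal{H}_N = \mathrm{span}\{\varphi_1,\dots,\varphi_N\}$, derive a closed-form recursion after one full period, and then use the resulting affine fixed-point structure together with $\|T\|_N<1$ to pass to the limit $m\to\infty$. Throughout I will treat the noise as periodized in the same manner as $\varphi_n^{(N)}$, i.e.\ $\epsilon_{mN+k}=\epsilon_k$ for $k=1,\dots,N$, since this is what the statement implicitly requires for the right-hand side to bound the error uniformly in $m$.

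First, substituting $\langle\vec{x},\varphi_{n+1}^{(N)}\rangle=\langle\mathbb{P}_N\vec{x},\varphi_{n+1}^{(N)}\rangle$ (valid because $\varphi_{n+1}^{(N)}\in\mathcal{H}_N$) into the corrupted update gives the single-step recursion
\begin{equation*}
e_{mN+k}=(I-P_k)\,e_{mN+k-1}-\epsilon_k\,\varphi_k^{(N)}\qquad (k=1,\dots,N).
\end{equation*}
Composing these $N$ updates in order and collecting the noise contributions at each stage yields
\begin{equation*}
e_{(m+1)N}=T\,e_{mN}-\eta,\qquad \eta=\epsilon_N\varphi_N^{(N)}+\sum_{j=1}^{N-1}(I-P_N)\cdots(I-P_{j+1})\,\epsilon_j\,\varphi_j^{(N)},
\end{equation*}
where $T=(I-P_N)(I-P_{N-1})\cdots(I-P_1)=\prod_{j=0}^{N-1}(I-P_{N-j})$ is precisely the operator of the proposition, and the empty product for the $j=N$ summand is the identity.

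Unwinding this affine recursion in $m$ produces $e_{mN}=T^m e_0 - (I+T+\cdots+T^{m-1})\eta$. The key fact is that $\|T\|_N<1$: each factor $(I-P_j)$ is an orthogonal projection of norm at most $1$, and since $\{\varphi_1,\dots,\varphi_N\}$ spans $\mathcal{H}_N$, no nonzero vector of $\mathcal{H}_N$ can simultaneously be orthogonal to every $\varphi_j$, so the strict contraction $\|Tv\|<\|v\|$ on the unit sphere of $\mathcal{H}_N$ upgrades (by compactness of the unit sphere in finite dimensions) to $\|T\|_N<1$. Consequently $T^m e_0\to 0$ and $\sum_{k\ge 0}T^k$ converges in operator norm on $\mathcal{H}_N$ to $(I-T)^{-1}$, with $\|(I-T)^{-1}\|_N\le 1/(1-\|T\|_N)$. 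Applying the triangle inequality to the expression for $\eta$ (using $\|\varphi_N^{(N)}\|=1$ for the $j=N$ term) then produces the stated bound.

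The main obstacle is not algebraic but organizational: cleanly identifying the one-period transfer operator $T$ and the accumulated noise residual $\eta$ requires tracking the products of $(I-P_j)$ in the correct (noncommutative) order, and verifying that the contractive estimate $\|T\|_N<1$ is what converts the formal Neumann series $\sum T^k$ into the geometric factor $1/(1-\|T\|_N)$ appearing in the bound. Once these two points are handled, the remainder is a routine triangle-inequality estimate on $\eta$.
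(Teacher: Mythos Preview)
Your proof is correct and follows essentially the same route as the paper: derive the single-step error recursion $e_{n}=(I-P_n)e_{n-1}-\epsilon_n\varphi_n^{(N)}$, compose over one period to get $e_{(m+1)N}=Te_{mN}-\eta$, unwind to $e_{mN}=T^m e_0-\sum_{k=0}^{m-1}T^k\eta$, and use $\|T\|_N<1$ together with the triangle inequality on $\eta$. Two minor remarks: the paper simply assumes $\vec{x}\in\mathcal{H}_N$ ``for simplicity'' whereas you work directly with $\mathbb{P}_N\vec{x}$, which is slightly cleaner; and you are right to flag that the noise must be periodized for the one-period recursion to have the same $\eta$ at every step---the paper uses this implicitly without stating it in this proposition.
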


	\begin{proof}
	Assume for simplicity, $\vec{x}\in\mathcal{H}_N$.  
	By definition of $\tilde{x}_n$,
	\begin{displaymath}
	\begin{split}
	\vec{x}-\tilde{x}_n &=\vec{x}-(\tilde{x}_{n-1}+\langle \vec{x}-\tilde{x}_{n-1},\varphi^{(N)}_{n}\rangle \varphi_n^{(N)}+\epsilon_n\varphi_{n}^{(N)})\\
	&=(I-P_n)(\vec{x}-\tilde{x}_{n-1})-\epsilon_n\varphi_n^{(N)}
	\end{split}
	\end{displaymath}
	If we start from $n=N$ and repeat this $N$ times, we obtain
	\begin{displaymath}
	\begin{split}
	    \vec{x}-\tilde{x}_{N}&=(I-P_{N})...(I-P_1)\vec{x}-\left(\epsilon_N\varphi_N^{(N)}+\sum_{j=1}^{N-1}(I-P_{N})...(I-P_{j+1})\epsilon_{j}\varphi_{j}^{(N)}\right)\\
	    &=T\vec{x}-\left(\epsilon_N\varphi_N^{(N)}+\sum_{j=1}^{N-1}(I-P_{N})...(I-P_{j+1})\epsilon_{j}\varphi_{j}^{(N)}\right)
	    \end{split}
	\end{displaymath}
    By the same reasoning, for any $m\ge2$,
    \begin{displaymath}
    \vec{x}-\tilde{x}_{mN}=T(\vec{x}-\tilde{x}_{(m-1)N})-\left(\epsilon_N\varphi_N^{(N)}+\sum_{j=1}^{N-1}(I-P_{N})...(I-P_{j+1})\epsilon_{j}\varphi_{j}^{(N)}\right).
    \end{displaymath}
    It then follows by induction that for all $m\in\mathbb{N}$
    \begin{displaymath}
        \vec{x}-\tilde{x}_{mN}=T^m\vec{x}-\sum_{k=0}^{m-1}T^k\left(\epsilon_N\varphi_N^{(N)}+\sum_{j=1}^{N-1}(I-P_{N})...(I-P_{j+1})\epsilon_{j}\varphi_{j}^{(N)}\right)
    \end{displaymath}

    Taking norms, we have
    \begin{displaymath}
    \begin{split}
        \|\vec{x}-\tilde{x}_{mN}\|&\le\|T\|_N^m\|\vec{x}\|+\left\|\sum_{k=0}^{m-1}T^k\left(\epsilon_N\varphi_N^{(N)}+\sum_{j=1}^{N-1}(I-P_N)...(I-P_{j+1})\epsilon_j\varphi_j^{(N)}\right)\right\|\\
        &\le\|T\|_N^m\|\vec{x}\|+\left(|\epsilon_N|+\sum_{j=1}^{N-1}\|(I-P_N)...(I-P_{j+1})\varphi_{j}^{(N)} \|  |\epsilon_j| \right) \left(\sum_{k=0}^\infty\|T\|^k_N\right)\\
        &=\|T\|_N^m\|\vec{x}\|+\left(|\epsilon_N|+\sum_{j=1}^{N-1}\|(I-P_N)...(I-P_{j+1}) \varphi_{j}^{(N)} \| |\epsilon_j| \right) \left(\frac{1}{1-\|T\|_N}\right)\\
    \end{split}
    \end{displaymath}
    Just as in the proof of the classical Kaczmarz algorithm (see e.g. \cite{HKW19a}), since $\{\varphi_n^{(N)}\}_{n=1}^{N}$ spans $\mathcal{H}_N$, we have $\|T\|_N<1$.  Thus if we take the limit as $m\to\infty$, we obtain the desired estimate.
	\end{proof}

Notice that if we bound the norm of the product of  the orthogonal projections above by 1, then the estimate above implies that
\[\limsup_{m\to\infty}\|\mathbb{P}_N\vec{x}-\tilde{x}_{mN}\|\le\frac{1}{1-\|T\|_N}\sum_{j=1}^N|\epsilon_j|.\]
We can improve this estimate if we apply the relaxed Kaczmarz algorithm to the truncated sequence.

\begin{Proposition}
For $N \in \mathbb{N}$, let $\{ \varphi_{n}^{(N)} \}$ denote the periodic sequence with period $N$ as in Equation (\ref{Eq:periodized-phi}), and $\{ \epsilon_{n}^{(N)} \}$ be periodized analogously.  For $\omega \in (0,2)$, define the approximations
\begin{align*}
    \vec{x}_{n}^{(N)}(\omega) &= \vec{x}_{n-1}^{(N)} + \omega \left( \langle \vec{x}, \varphi_{n}^{(N)} \rangle + \epsilon_{n}^{(N)} \right) \varphi_{n}^{(N)}, \\
    \vec{y}^{(N)}(\omega) &= \lim_{m \to \infty} \vec{x}^{(N)}_{Nm}(\omega), \\
    \vec{z}^{(N)} &= \lim_{\omega \to 0} \vec{y}^{(N)}(\omega).
\end{align*}
Then, $\| \mathbb{P}_{N} \vec{x} - \vec{z}^{(N)} \|^{2} \leq \dfrac{1}{A_{N}}  \sum_{n=1}^{N} | \epsilon_{n} |^2$ where $A_{N}$ is the lower frame bound of $\{ \varphi_{n} \}_{n=1}^{N}$.
\end{Proposition}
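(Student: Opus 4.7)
The plan is to reduce the claim to the least-squares characterization of the limit $\vec{z}^{(N)}$ that was recorded in Subsection \ref{ssec:RK-LSS} and then to invoke the lower frame bound of $\{\varphi_n\}_{n=1}^{N}$. Concretely, let $\mathcal{H}_N = \mathrm{span}\{\varphi_1, \dots, \varphi_N\}$ and let $T_N : \mathcal{H}_N \to \mathbb{C}^N$ be the analysis operator $T_N \psi = (\langle \psi, \varphi_n \rangle)_{n=1}^{N}$. Because $\{\varphi_n\}_{n=1}^{N}$ is a frame for $\mathcal{H}_N$ with lower bound $A_N$, one has $\|T_N \psi\|_{\mathbb{C}^N}^2 \geq A_N \|\psi\|^2$ for every $\psi \in \mathcal{H}_N$.

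First I would argue that $\vec{z}^{(N)} \in \mathcal{H}_N$ and that it solves the least-squares problem
\[ \vec{z}^{(N)} = \arg\min_{z \in \mathcal{H}_N} \sum_{n=1}^{N} \bigl| \langle z, \varphi_n \rangle - c_n \bigr|^2, \qquad c_n := \langle \vec{x}, \varphi_n \rangle + \epsilon_n, \]
which is exactly the content of the result of Tanabe/Natterer quoted in Subsection \ref{ssec:RK-LSS}. (The inclusion $\vec{z}^{(N)} \in \mathcal{H}_N$ follows because each iterate $\vec{x}_n^{(N)}(\omega)$ is a linear combination of the $\varphi_k$.) The defining property of the least-squares solution is equivalent to $T_N \vec{z}^{(N)} = P_{\mathrm{Ran}(T_N)}\, c$ in $\mathbb{C}^N$.

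Second, I would observe that, because each $\varphi_n \in \mathcal{H}_N$, one has $\langle \vec{x}, \varphi_n \rangle = \langle \mathbb{P}_N \vec{x}, \varphi_n \rangle$, so $c = T_N(\mathbb{P}_N \vec{x}) + \epsilon$ and $T_N(\mathbb{P}_N \vec{x})$ lies automatically in $\mathrm{Ran}(T_N)$. Subtracting,
\[ T_N\bigl(\vec{z}^{(N)} - \mathbb{P}_N \vec{x}\bigr) = P_{\mathrm{Ran}(T_N)}\bigl(c - T_N \mathbb{P}_N \vec{x}\bigr) = P_{\mathrm{Ran}(T_N)}(\epsilon), \]
and since an orthogonal projection is a contraction, $\bigl\| T_N(\vec{z}^{(N)} - \mathbb{P}_N \vec{x}) \bigr\|_{\mathbb{C}^N}^2 \leq \sum_{n=1}^{N} |\epsilon_n|^2$.

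Finally, since $\vec{z}^{(N)} - \mathbb{P}_N \vec{x} \in \mathcal{H}_N$, applying the lower frame bound yields
\[ A_N \bigl\| \vec{z}^{(N)} - \mathbb{P}_N \vec{x} \bigr\|^2 \leq \bigl\| T_N(\vec{z}^{(N)} - \mathbb{P}_N \vec{x}) \bigr\|_{\mathbb{C}^N}^2 \leq \sum_{n=1}^{N} |\epsilon_n|^2, \]
which is the claimed estimate. The main obstacle is really a bookkeeping one: one has to be careful that the relaxation parameter $\omega$ is sent to $0$ \emph{after} the period-$m$ limit so that the cited least-squares characterization applies, and one must verify that $\vec{z}^{(N)}$ belongs to $\mathcal{H}_N$ (so that the one-sided frame inequality, rather than something weaker, is available). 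Once that is in place, the argument is a two-line consequence of the normal equations and the frame bound.
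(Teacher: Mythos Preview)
Your proposal is correct and follows essentially the same route as the paper: invoke the least-squares characterization of $\vec{z}^{(N)}$ from Subsection~\ref{ssec:RK-LSS}, then use the lower frame bound of $\{\varphi_n\}_{n=1}^{N}$ to pass from the coefficient estimate to the norm estimate. The only cosmetic difference is that the paper phrases the final step via the Moore--Penrose inverse $\Theta_N^{\dagger}$ (writing $\vec{z}^{(N)}-\mathbb{P}_N\vec{x}=\Theta_N^{\dagger}\epsilon$ and using $\|\Theta_N^{\dagger}\|\le A_N^{-1/2}$), whereas you use the equivalent formulation $T_N\vec{z}^{(N)}=P_{\mathrm{Ran}(T_N)}c$ and apply the frame inequality directly; these are the same argument in different dress.
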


\begin{proof}
We have that $\vec{z}^{(N)}$ is the least-square solution to the system of equations
\begin{equation*}
    \langle \vec{z}, \varphi_{n} \rangle = \langle \vec{x}, \varphi_{n} \rangle + \epsilon_{n}, n=1, \dots, N.
\end{equation*}
We know that the solution $\vec{z}^{(N)}$ is given by the Moore-Penrose inverse $\Theta_{N}^{\dagger}$ of the data:
\begin{equation*}
    \vec{z}^{(N)} = \Theta_{N}^{\dagger} \begin{pmatrix} \langle \vec{x}, \varphi_{1} \rangle + \epsilon_{1} \\ \vdots \\ \langle \vec{x}, \varphi_{N} \rangle + \epsilon_{N} \end{pmatrix},
\end{equation*}
where $\Theta_{N}$ is the analysis operator of $\{\varphi_{n} \}_{n=1}^{N}$:
\begin{equation*}
    \Theta_{N} : H_{N} \to \mathbb{C}^{N}: \vec{x} \mapsto \begin{pmatrix} \langle \vec{x}, \varphi_{1} \rangle \\ \vdots \\ \langle \vec{x}, \varphi_{N} \rangle \end{pmatrix}.
\end{equation*}
If $A_{N}$ is the lower frame bound of $\{\varphi_{n}\}_{n=1}^{N}$, then $\Theta_{N}$ is bounded below by $\sqrt{A_{N}}$.  It follows that $\Theta_{N}^{\dagger}$ is bounded above by $\frac{1}{\sqrt{A_{N}}}$.  Since
\begin{equation*}
    \Theta_{N}^{\dagger} \begin{pmatrix} \langle \vec{x}, \varphi_{1} \rangle \\ \vdots \\ \langle \vec{x}, \varphi_{N} \rangle \end{pmatrix} = \mathbb{P}_{N} \vec{x},
\end{equation*}
the estimate now follows.
\end{proof}

\section{Appendix}

\begin{proof}[Proof of Proposition \ref{P:divergence}]
The first statement follows from the second, but an alternative proof can be found in \cite[Theorem 3.10]{DHW14}.  The second statement follows from Fatou's construction \cite[Section II.A]{Koo98}.
\end{proof}

\begin{proof}[Proof of Lemma \ref{L:FS-convergence}]
For the inner function $b^{K}$, let $\mu^{K}$ denote the corresponding singular measure.  By the Herglotz Representation, $\mu << \mu^{K}$.  Since by \cite{Pol93} every $f \in H^2 \ominus b^{K} H^2$ has convergent Fourier series in $L^2(\mu^{K})$, it follows that the Fourier series also converges in $L^2(\mu)$.
\end{proof}

\begin{acknowledgement}
Lee Przybylski and Eric Weber were supported in part by the National Science Foundation and National Geospatial-Intelligence Agency under award \#1830254.
\end{acknowledgement}

\newcommand{\etalchar}[1]{$^{#1}$}
\providecommand{\bysame}{\leavevmode\hbox to3em{\hrulefill}\thinspace}
\providecommand{\MR}{\relax\ifhmode\unskip\space\fi MR }
% \MRhref is called by the amsart/book/proc definition of \MR.
\providecommand{\MRhref}[2]{%
  \href{http://www.ams.org/mathscinet-getitem?mr=#1}{#2}
}
\providecommand{\href}[2]{#2}

\end{document}